\documentclass[preprint,12pt]{elsarticle}

\usepackage{amssymb}
\usepackage{amsthm}

\newtheorem{theorem}{Theorem}[section]
\newtheorem{lemma}[theorem]{Lemma}
\newtheorem{proposition}[theorem]{Proposition}

\newtheorem{remark}{Remark}[section]
\theoremstyle{definition}

\theoremstyle{remark}
\usepackage{amsmath}
\usepackage{graphics}
\usepackage{graphicx}
\usepackage{tikz}
\usetikzlibrary{decorations.pathreplacing}
\usepackage{lineno}
\usepackage{caption}    
\usepackage{subcaption}
\numberwithin{figure}{section}

\usepackage[
    left=2cm,
    right=2.5cm,
    top=2.5cm,
    bottom=3cm
]{geometry}
\begin{document}

\begin{frontmatter}


\title{Nonlinear stability of large amplitude viscous shock wave for compressible magnetohydrodynamics flows}
 \author[]{Lilu Sahu\fnref{label1}}
\author{T Raja Sekhar\corref{cor1}\fnref{label2}}
\ead{trajasekhar@maths.iitkgp.ac.in}
\cortext[cor1]{Corresponding author}
\affiliation{organization={$^1$Department of Mathematics},
            addressline={Indian Institute of Technology Kharagpur},
            city={Kharagpur},
            postcode={721302},
            state={West Bengal},
            country={India}}




\begin{abstract}
In this article, we establish the nonlinear time-asymptotic stability of viscous shock wave with arbitrary large amplitude to the Cauchy problem for the compressible magnetohydrodynamics system. In contrast to prior related work, which relied on various smallness assumptions on both the amplitude and the speed of the viscous shock wave, here we show that any viscous shock profile is time-asymptotically stable under small initial disturbance with zero mass condition without making any assumptions on the amplitude and speed of the viscous shock wave. Moreover, while most of the stability results are obtained for constant viscosity in the momentum equation, the present results hold even for the general bulk viscosity as a  smooth function of specific volume. The key part of the proof is introducing a new variable which provides an effective diffusion in the specific volume. 
 
\end{abstract}



\begin{keyword}
 Asymptotic stability \sep Energy estimates\sep Viscous shock wave \sep Nonlinear stability   \sep Magnetohydrodynamics system 



\end{keyword}

\end{frontmatter}



\section{Introduction}\label{sec1}
Magnetohydrodynamics (MHD) stands as one of the most important and challenging fields from both mathematical and physical point of view. It deals with the dynamics of electrically conducting fluids, such as electrolytes, liquid metals, plasmas and their behaviour under the influence of magnetic fields.  It has significant physical relevance, with applications across a wide range of areas including geophysics, nuclear engineering, advanced cooling technologies, electronics and industrial processes such as aluminium production. This type of flow often exhibits complex mathematical structure due to the presence of strong coupling between fluid velocity and magnetic field. In this article, we consider the compressible one-dimensional MHD flows which are governed by the following system of partial differential equations \cite{chen2002global, fan2007vanishing}:
\begin{align}\label{eq1}
\begin{cases}
\rho_t + (\rho u)_x = 0,\\
(\rho u)_t + \big(\rho u^2 + p + \tfrac{1}{2}|\mathbf{b}|^2\big)_x
= (\lambda\left(\rho\right) u_x)_x, \qquad x \in \mathbb{R}, \quad t>0,\\
(\rho \mathbf{w})_t + (\rho u \mathbf{w} - \mathbf{b})_x
= (\mu \mathbf{w}_x)_x,\\
\mathbf{b}_t + (u\mathbf{b}-\mathbf{w})_x
= (\nu \mathbf{b}_x)_x ,
\end{cases}
\end{align}
where $\rho(x,t)\in \mathbb{R}^+$ denotes the fluid density, $u(x,t)\in \mathbb{R}$ represents the longitudinal velocity, $\mathbf{w}(x,t)\in \mathbb{R}^2$ describes the transverse velocity field and $\mathbf{b}(x,t)\in \mathbb{R}^2$ corresponds to the transverse magnetic field. The pressure $p(\rho)$ is a smooth function of $\rho$ as the equation of state and $\lambda(\rho)>0$ is the bulk viscosity coefficient. The constants $\mu$ and $\nu>0$ are the shear viscosity and magnetic diffusion coefficient, respectively.\\
\par In the case of one-dimensional compressible MHD system for constant bulk viscosity $\lambda>0$, we can find many interesting mathematical results in literature; for the isentropic case, Barker et al. \cite{barker2010one} show that the stability of parallel shock layers without any restriction on physical parameters, the well-posedness of strong solution locally (in time) with large initial data established in \cite{li2011local}. Moreover, Yang et al. \cite{yang2014global} established the global well-posedness of strong solutions in a bounded domain, provided the initial data are small enough, see \cite{yin2017stability,yao2023nonlinear,yao2025nonlinear} and the references therein. However, it is observed that for most of the fluids the viscosity relies on density at least in the context of the isentropic case $p(\rho) = k\rho^\gamma$ \cite{mellet2008existence,matsumura2010asymptotic,sekhar2010riemann}. Thus, in this work, we consider the viscosity coefficient $\lambda(\rho)$ as a smooth function of density. In particular, we express the governing system \eqref{eq1} in the Lagrangian mass coordinates, denote by $(x,t)$, which takes the following form \cite{ding2021asymptotic}:
\begin{align}\label{eq2}
\begin{cases}
 v_t -  u_x = 0, \\[0.3cm]
 u_t +  \left( p + \dfrac{1}{2}|\mathbf{b}|^2 \right)_x 
=   \left( \dfrac{\lambda(v)}{v}u_x \right)_x, \qquad x \in \mathbb{R}, \quad t>0, \\[0.5cm]
 \mathbf{w}_t -  \mathbf{b} _x
=   \left( \mu\dfrac{ \mathbf{w}_x}{v} \right)_x, \\[0.5cm]
(v \mathbf{b})_t -  \mathbf{w}_x
=   \left( \nu\dfrac{\mathbf{b}_x}{v} \right)_x,
\end{cases}
\end{align}
with initial data
\begin{align}\label{eq3}
    (v, u, \mathbf{w}, \mathbf{b})|_{t=0}= \left(v_0(x), u_0(x), \mathbf{w}_0(x), \mathbf{b}_0(x)\right)
\end{align}
having possibly different far field states:
\begin{align}\label{eq4}
    \left(v_0(x), u_0(x), \mathbf{w}_0(x), \mathbf{b}_0(x)\right) \longrightarrow \left(v_{\pm}, u_{\pm}, \mathbf{w}_{\pm}, \mathbf{b}_{\pm}\right), \qquad \text{as}~~x\longrightarrow \pm \infty,
\end{align}
where $v(x,t) = \frac{1}{\rho(x,t)}>0$ represents the specific volume, we consider $\lambda(v)$ to be smooth function of specific volume $v$ and the equation of state as $p = p(v)>0$ such that 
\begin{align}\label{eq5}
    p'(v)<0,\quad p''(v)> 0,\quad p''(v) \not\equiv 0
\end{align}
for all positive values of $v$. The assumptions stated on $p$ in this case are relatively normal and necessary to make sure that the related inviscid model is hyperbolic, including the isentropic case, i.e., $p(v) = kv^{-\gamma}, \gamma\geq 1,~~k>0$.
\par Observe that in the absence of dissipative effects, the inviscid MHD system arises as the idealisation of the corresponding viscous system \eqref{eq2}. Thus, it is of fundamental interest to explore the large-time stability behaviour of viscous shock solutions to the associated system \eqref{eq2}. The stability theory of viscous shock waves for the associated system was first studied by Matsumura-Nishihara \cite{matsumura1985stability} under the assumptions that the initial disturbance is small with zero mass condition. Since then, there have been significant progress of methodology and techniques in this direction, such as the weighted energy method \cite{matsumura2010asymptotic}, approximate Green's function and Evans function approach \cite{liu1997pointwise, liu2009time, kawashima1985asymptotic, mascia2003pointwise}. Matsumura-Nishihara \cite{matsumura1985stability} proved that the viscous shock profile is time-asymptotically stable provided $(\gamma-1)$(\emph{total variation of initial data}) is small and subsequently in \cite{matsumura2010asymptotic} and \cite{vasseur2016nonlinear} these assumptions are removed in the case when bulk viscosity as power of specific volume. Recently, He and Huang \cite{he2020nonlinear} achieved a further generalisation of the result by removing the power-law assumption that was made for the bulk viscosity coefficient.  In addition, some progress established in the field of viscous composite waves, which consist of a viscous shock, see \cite{huang2009stability, fan2019asymptotic, li2013asymptotic}, where the stability and large-time behaviour of such composite structures were thoroughly investigated. For more results concerning the large-time asymptotic stability of various wave patterns, see \cite{gong2026nonlinear,chhatria2025stability,liu2021nonlinear,liu2023stability} and the references therein.\\
\par It is worth noting that the above mentioned results are mainly carried out for $2\times2$ viscous gas or the full compressible Navier-Stokes equations. In contrast, the corresponding stability theory for the viscous MHD system remains comparatively less developed which motivates the present study. Nevertheless, in recent years, some progress has been made towards the stability of viscous solutions to the compressible MHD system. In particular, Yin \cite{yin2017stability} established the stability of the viscous contact discontinuity; subsequently, in \cite{yin2018stability} Yin studied the time-asymptotic stability of the composite wave pattern to the initial-boundary value problem for the inflow problem in MHD flow. Concerning the stability of the rarefaction wave, Yao and Zhu \cite{yao2023nonlinear} analysed the time-asymptotic stability under small initial perturbations and weak wave strength. Prior to this, Ding and Yin \cite{ding2021asymptotic} demonstrated the stability of the viscous shock wave to the system \eqref{eq2} under the assumption that $(\gamma-1)$(\emph{total variation of initial data}) is sufficiently small.  This assumption is similar to the assumption of Matsumura-Nishihara \cite{matsumura1985stability}. In addition, Ding and Yin required an extra condition on the shock speed, namely $s^2>\frac{1}{2}v_-$, which implies that the considered viscous shock is weak. We remark that all the above-mentioned work on MHD system was carried out for constant bulk viscosity $\lambda>0$.
\par Motivated by the progress achieved in the Navier-Stokes equations, the primary objective of this article is to extend the stability results of \cite{ding2021asymptotic} to the case of arbitrary shock amplitude for the MHD system \eqref{eq2}, allowing for a general pressure law along with bulk viscosity coefficient as a smooth function of the specific volume rather than constant. To achieve the time-asymptotic stability in a general setting, it is crucial to have sufficient diffusive structure in the nonlinear term. For this purpose, inspired by \cite{vasseur2016nonlinear, he2020nonlinear}, \cite{shelukhin1984structure}, we consider a new variable $h(x,t)$  and reduce the governing system \eqref{eq2} to a new system \eqref{eq22} where the diffusivity is now in the specific volume $v$. This reformulation plays a key role in deriving the required energy estimates and establishing the stability result.
\par The rest of the article is structured as follows: In Section \ref{sec2}, we discuss the viscous shock profile and state our main theorem. Reformulation of the governing equations in terms of the perturbation variables near the viscous shock is established in Section \ref{sec3}. Section \ref{sec4} is devoted to the derivation of a priori estimates. Finally, the main theorem is proved in Section \ref{sec5}.
\section{Viscous shock profiles and main theorem}\label{sec2}
In view of relationship between MHD and the classical Navier-Stokes equations in this article we assume that in the far field state the transverse velocity and magnetic field are zero, i.e., $\mathbf{w}_{\pm} = \mathbf{b}_{\pm} = 0$ \cite{yao2021asymptotic, yin2018stability2, yin2020convergence}. Then the large time behaviour of the Cauchy problem \eqref{eq2}-\eqref{eq4} is expect to be described by 
\begin{align}\label{eq6}
   \begin{cases}
    v_t -  u_x = 0, \\[0.2cm]
 u_t +  \left( p(v)\right)_x 
=   \left( \frac{\lambda(v)}{v}u_x \right)_x.
   \end{cases}
\end{align}
We now look for a travelling wave solution (viscous shock wave) $(\overline{v}, \overline{u})(\xi)$ to the system \eqref{eq6}, where $\xi = x-\sigma\,t$. Then $(\overline{v}, \overline{u})$ satisfies
\begin{align}\label{eq7}
    \begin{cases}
        -\sigma\overline{v}_\xi - \overline{u}_\xi = 0, \\[0.15cm]
        -\sigma\overline{u}_\xi +p(\overline{v})_\xi = \left(\frac{\lambda(\overline{v})\overline{u}_\xi}{\overline{v}}\right)_\xi, \\[0.15cm]
        (\overline{v}, \overline{u})|_{\pm \infty} = (v_\pm, u_\pm).
    \end{cases}
\end{align}
The large time behaviour of the system \eqref{eq6} is well characterised by the corresponding inviscid system 
\begin{align}
    \begin{cases}\label{eq8}
           v_t -  u_x = 0, \\[0.1cm]
           u_t +  \left( p(v)\right)_x = 0
    \end{cases}
\end{align}
with Riemann initial data
\begin{equation*}
    (v,u)(x,0) = \begin{cases}
        (v_-, u_-)\quad \text{if}~~ x<0\\
        (v_+, u_+)\quad \text{if}~~ x>0.
    \end{cases}
\end{equation*}
The eigenvalues and eigenvectors of the system \eqref{eq8} are
\begin{align*}
    \lambda_{\pm} = \pm\sqrt{-p'(v)} \quad\text{and} \quad r_{\pm}= \left(1,~ \pm\sqrt{-p'(v)}\right)^T,
\end{align*}
respectively. Since $\nabla \lambda _{\pm} \,.\, r_\pm = \pm\frac{p''(v)}{2\sqrt{-p'(v)}} $ is non-zero so, both the characteristic fields are genuinely nonlinear. As a result, the wave corresponding to both the characteristic fields are either shock or rarefaction. Henceforth, we only focus on $1-$shock ($2-$ shock is similar), i.e., shock corresponding to the first characteristic field. In this case $(v_\pm , u_\pm)$ satisfy the Rankine-Hugoniot relations
\begin{align}\label{eq9}
    \begin{cases}
        -\sigma(v_--v_+) = (u_{-}-u_+),\\
        -\sigma (u_--u_+) = (p(v_+)-p(v_-))
    \end{cases}
\end{align}
which yields
\begin{align*}
    \sigma = \pm \sqrt{-\dfrac{p(v_+)-p(v_-)}{v_+-v_-}}.
\end{align*}
Here we are concerned about $1$-shock ($\sigma<0$) and the associated entropy condition given by 
\begin{lemma}[\cite{godlewski2013numerical}]\label{lemma2.1}
 Across 1-shock, $v_->v_+$ and $u_->u_+$ if and only if Lax conditions hold, i.e.,
    \begin{align*}
        \lambda_-(v_+)< \sigma < \lambda_-(v_-),\qquad \sigma < \lambda_+(v_+).
    \end{align*}
\end{lemma}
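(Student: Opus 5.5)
We would prove the equivalence directly from the Rankine--Hugoniot relations \eqref{eq9} and the structural assumptions \eqref{eq5} on $p$, with $\sigma<0$ fixed as the $1$-shock speed. Since $\lambda_-(v)=-\sqrt{-p'(v)}$ and $p''>0$, the function $v\mapsto -p'(v)$ is strictly decreasing. Hence $\lambda_-(v)$ is strictly increasing in $v$. Also $\lambda_+(v_+)=\sqrt{-p'(v_+)}>0>\sigma$, so the condition $\sigma<\lambda_+(v_+)$ holds automatically. The statement therefore reduces to
\[
v_->v_+ \iff -\sqrt{-p'(v_+)}<\sigma<-\sqrt{-p'(v_-)} .
\]

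\textbf{Step 1 (the velocity inequality).} By the first relation in \eqref{eq9}, $u_--u_+=-\sigma(v_--v_+)$. Since $-\sigma>0$, we get $u_->u_+$ exactly when $v_->v_+$. So it suffices to deal with $v$.

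\textbf{Step 2 (the key step).} Recall $\sigma^2=-\frac{p(v_+)-p(v_-)}{v_+-v_-}$. By the mean value theorem, $\sigma^2=-p'(\theta)$ for some $\theta$ strictly between $v_-$ and $v_+$. More sharply, strict convexity of $p$ (from $p''>0$) places the chord slope strictly between the endpoint derivatives:
\[
p'(\min\{v_\pm\})<\frac{p(v_+)-p(v_-)}{v_+-v_-}<p'(\max\{v_\pm\}).
\]

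\textbf{Step 3 (forward direction).} Suppose $v_->v_+$. Step 2 gives $-p'(v_-)<\sigma^2<-p'(v_+)$. Taking square roots and using $\sigma<0$ yields $-\sqrt{-p'(v_+)}<\sigma<-\sqrt{-p'(v_-)}$, which is $\lambda_-(v_+)<\sigma<\lambda_-(v_-)$.

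\textbf{Step 4 (converse).} Suppose the Lax inequalities hold. Then $\lambda_-(v_+)<\lambda_-(v_-)$. Because $\lambda_-$ is strictly increasing, this forces $v_+<v_-$. Step 1 then gives $u_->u_+$. The case $v_-=v_+$ is excluded, since it would make the chain of strict inequalities impossible.

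\textbf{Main obstacle.} The only delicate point is the strict inequalities in Step 2. These require strict convexity of $p$ on the interval between $v_+$ and $v_-$, which is where $p''>0$ from \eqref{eq5} is used. The rest is sign bookkeeping with $\sigma<0$.
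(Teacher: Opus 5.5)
Your proof is correct. The paper does not prove this lemma itself: it quotes it from \cite{godlewski2013numerical}, so there is no internal argument to compare against. What you give is the standard elementary verification for the $p$-system, and it uses exactly the hypotheses the paper has in force:
\begin{itemize}
\item the first Rankine--Hugoniot relation in \eqref{eq9}, together with $\sigma<0$, ties the sign of $u_--u_+$ to that of $v_--v_+$;
\item strict convexity of $p$ traps the chord slope $-\sigma^2$ strictly between $p'(v_+)$ and $p'(v_-)$, which gives the forward direction;
\item strict monotonicity of $\lambda_-(v)=-\sqrt{-p'(v)}$ gives the converse.
\end{itemize}

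Compared with simply citing the general Lax theory, your route makes explicit several things the citation leaves implicit. First, $\sigma<\lambda_+(v_+)$ is automatic, because $\sigma<0<\lambda_+(v_+)$. Second, the converse needs only the consequence $\lambda_-(v_+)<\lambda_-(v_-)$ of the Lax inequalities, and those inequalities themselves force $\sigma<\lambda_-(v_-)<0$. Third, the strictness in your Step 2 is exactly where the pointwise condition $p''>0$ from \eqref{eq5} is needed, not merely $p''\not\equiv 0$.
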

Integrating system \eqref{eq7} over $(\pm \infty, \xi)$ and taking into account $\overline{u}_\xi \rightarrow 0$  as $\xi \rightarrow \pm \infty$ together with $\eqref{eq7}_1$ we obtain
\begin{align}\label{eq10}
    \begin{cases}
        \sigma \,\overline{v}+\overline{u} = \sigma \, v_\pm + u_\pm, \\ 
        \overline{v}_\xi = \dfrac{\overline{v}}{\sigma \lambda(v)}\left[\sigma^2 (v_- -\overline{v})+p(v_-)-p(\overline{v})\right] =: H(\overline{v})
    \end{cases}
\end{align}
For the existence of viscous shock, we have
\begin{lemma}[\cite{he2020nonlinear}]\label{lemma2.2}
     Assume that $p$ satisfies the conditions \eqref{eq5} and $\left(v_\pm, u_\pm\right)$ holds the Lax entropy conditions. Then the system \eqref{eq7} admits a unique smooth solution $(\overline{v}, \overline{u})(\xi)$ up to a shift. In addition, $v'(\xi)<0$. 
\end{lemma}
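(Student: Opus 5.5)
The plan is to reduce \eqref{eq7} to the scalar autonomous ODE $\eqref{eq10}_2$ and study it as a heteroclinic connection problem on the interval $[v_+,v_-]$. First, $\eqref{eq7}_1$ integrates to $\sigma\overline v+\overline u=\sigma v_\pm+u_\pm$. The two constants agree by the first Rankine--Hugoniot relation in \eqref{eq9}, so $\overline u$ is determined by $\overline v$. Substituting $\overline u_\xi=-\sigma\overline v_\xi$ into $\eqref{eq7}_2$ and integrating from $-\infty$ gives
\[
-\frac{\sigma\lambda(\overline v)}{\overline v}\,\overline v_\xi=\sigma^2(\overline v-v_-)+p(\overline v)-p(v_-),
\]
which is $\overline v_\xi=H(\overline v)$ (with $\lambda$ evaluated at $\overline v$). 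Integrating from $+\infty$ instead yields the same equation, because of the second relation in \eqref{eq9}. The problem is thus equivalent to finding a solution of $\overline v'=H(\overline v)$ with $\overline v(\pm\infty)=v_\pm$.

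Next I analyze the sign of $H$ on $[v_+,v_-]$. Set $g(v)=\sigma^2(v_--v)+p(v_-)-p(v)$, so that $H=\frac{v}{\sigma\lambda(v)}g$. We have $g(v_-)=0$, and $g(v_+)=0$ by the definition of $\sigma^2$ from \eqref{eq9}. Since $p''>0$ by \eqref{eq5}, $g''=-p''<0$, so $g$ is strictly concave and its only zeros are $v_\pm$. On the open interval $(v_+,v_-)$, concavity gives $g>0$, because $g$ lies above the chord joining its two zeros. Lemma \ref{lemma2.1} gives $v_->v_+$, so this interval is nondegenerate. Since $\sigma<0$, $\lambda>0$ and $v>0$, it follows that $H<0$ on $(v_+,v_-)$ and $H(v_\pm)=0$. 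Consistently with the Lax conditions, $g'(v_-)=-\sigma^2-p'(v_-)<0$ and $g'(v_+)>0$, so $H'(v_-)>0$ and $H'(v_+)<0$. Hence $v_-$ is a hyperbolic repeller and $v_+$ a hyperbolic attractor of the flow.

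With this in hand, existence, uniqueness up to shift, and monotonicity follow from standard facts about scalar autonomous ODEs. Take any $v_0\in(v_+,v_-)$ and solve with $\overline v(0)=v_0$. Since $H$ is smooth, hence locally Lipschitz, the equilibria $v_\pm$ cannot be reached in finite time. The solution therefore remains in $(v_+,v_-)$ and exists for all $\xi$. It is strictly decreasing because $H<0$ there, which gives $\overline v'(\xi)<0$. By monotonicity it has limits at $\pm\infty$, and these limits must be zeros of $H$. The only possibility is $\overline v(-\infty)=v_-$ and $\overline v(+\infty)=v_+$, and hyperbolicity gives exponential convergence. For uniqueness, any solution connecting $v_-$ to $v_+$ must take values in $(v_+,v_-)$, so it passes through $v_0$ at some $\xi_0$. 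By ODE uniqueness it is then the translate by $\xi_0$ of the solution constructed above. Smoothness of $\overline v$, and hence of $\overline u=\sigma v_-+u_--\sigma\overline v$, comes from smoothness of $H$.

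The only substantive step is the sign analysis of $g$. It relies on the convexity assumption $p''>0$ and on the fact that $\sigma^2$ is exactly the chord slope, so that the zeros of $g$ are precisely $v_\pm$ with no interior zero. Everything else is routine phase-line reasoning.
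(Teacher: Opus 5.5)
Your proof is correct and is essentially the argument the paper relies on. The paper gives no proof of its own here beyond reducing \eqref{eq7} to the scalar ODE $\eqref{eq10}_2$ and citing He--Huang, and your phase-line analysis of $\overline{v}_\xi=H(\overline{v})$ is the standard argument behind that citation: strict concavity of $g$ gives $H<0$ on $(v_+,v_-)$, the rest points $v_\pm$ are hyperbolic, and translation invariance gives uniqueness.

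Two small steps deserve one more line each in a write-up. First, your uniqueness claim reduces an arbitrary solution of \eqref{eq7} to $\eqref{eq10}_2$, which needs the viscous flux $\lambda(\overline{v})\overline{u}_\xi/\overline{v}$ to vanish at $\pm\infty$; integrate $\eqref{eq7}_2$ from $0$ and use that $\overline{u}$ converges. Second, a nonconstant connecting orbit cannot touch the equilibria $v_\pm$, so it stays in $(v_+,v_-)$.
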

Before stating our main theorem, we now consider a new variable $h := u\,- f(v)_x$, in which 
\begin{align}\label{eq11}
    f(v) = \int^v \dfrac{\lambda(\theta)}{\theta}\text{d}\theta.
\end{align}
The variable $h$ is known as the effective velocity \cite{vasseur2016nonlinear, shelukhin1984structure}. In fact, using this effective velocity $h$, Shelukin  \cite{shelukhin1984structure} derived the required entropy estimates and in \cite{vasseur2016nonlinear,he2020nonlinear}, it was used for the case of compressible $2 \times 2$ system to study the time asymptotic stability of shock having large amplitude.\\
By \eqref{eq6}, the unknown variable $(v,h)$ satisfies
\begin{align}\label{eq12}
    \begin{cases}
        v_t - h_x = (f'(v)v_x)_x,\\
        h_t + p(v)_x = 0
    \end{cases}
\end{align}
with initial data
\begin{align}\label{eq13}
    (v,h)|_{t=0} = \left(v_0, u_0\, - f'(v_0)v_0'\right)(x) =: (v_0,h_0)(x),
\end{align}
having possibly different far-field states:
\begin{align}\label{eq14}
    \left(v, h\right)(x,t) \longrightarrow \left(v_{\pm}, u_{\pm}\right), \qquad \text{as}~~x\longrightarrow \pm \infty,~~t\geq0.
\end{align}
 Accordingly the viscous shock $(\overline{v}, \overline{h})$, where $\overline{h}:= \overline{u}-f(\overline{v})_\xi$ satisfies
 \begin{align}\label{eq15}
      \begin{cases}
        -\sigma\overline{v}_\xi - \overline{h}_\xi = \left(f'(\overline{v})\overline{v}_\xi\right)_\xi, \\[0.15cm]
        -\sigma\overline{h}_\xi +p(\overline{v})_\xi = 0, \\[0.15cm]
        (\overline{v}, \overline{h})|_{\pm \infty} = (v_\pm, u_\pm)
    \end{cases}
 \end{align}
which is equivalent to the system \eqref{eq7}. Suppose that
\begin{align}\label{eq16}
    (v_0\,-\overline{v},\, h_0-\overline{h},\, \mathbf{w}_0,\, v_0\mathbf{b}_0)(\xi)\in H^1 \cap L^1, \qquad \inf_{x\in\mathbb{R}}~v_0(x) >0
\end{align}
with
\begin{align}\label{eq17}
\begin{cases}
\displaystyle \int_{-\infty}^{+\infty} (v_0 - \overline{v})(\eta)\,\text{d}\eta = 0, 
& \displaystyle \int_{-\infty}^{+\infty} (u_0 - \overline{u})(\eta)\,\text{d}\eta = 0, \\[0.65cm]
\displaystyle \int_{-\infty}^{+\infty} \mathbf{w}_0(\eta)\,\text{d}\eta = 0, 
& \displaystyle \int_{-\infty}^{+\infty} (v_0 \mathbf{b}_0)(\eta)\,\text{d}\eta = 0.
\end{cases}
\end{align}
Further, we define the integrals
\begin{align}\label{eq18}
\begin{cases}
\displaystyle V_0(x) = \int_{-\infty}^{x} (v_0 - \overline{v})(\eta)\,\text{d}\eta, 
& \displaystyle H_0(x) = \int_{-\infty}^{x} (h_0 - \overline{h})(\eta)\,\text{d}\eta, \\[0.65cm]
\displaystyle \mathbf{W}_0(x) = \int_{-\infty}^{x} \mathbf{w}_0(\eta)\,\text{d}\eta, 
& \displaystyle \mathbf{B}_0(x) = \int_{-\infty}^{x} (v_0 \mathbf{b}_0)(\eta)\,\text{d}\eta,
\end{cases}
\end{align}
which exist for all $x\in \mathbb{R}$ and satisfy
\begin{align}\label{eq19}
    (V_0,\, H_0,\, \mathbf{W}_0,\, \mathbf{B}_0)(x) \in L^2(\mathbb{R}).
\end{align}
Here, the condition \eqref{eq17} is usually known as the zero mass condition. With the above setting our main result of the article as
\setcounter{theorem}{0}
\begin{theorem}[Main theorem]\label{theorem2.1}
 Under the assumptions \eqref{eq16}, \eqref{eq17} and \eqref{eq19}, there exists a positive constant $\delta_0$ such that if $\lVert (V_0, H_0, \mathbf{W}_0, \mathbf{B}_0) \rVert_2\leq \delta_0$, then the Cauchy problem \eqref{eq2}-\eqref{eq4} has a unique global solution $(v,u, \mathbf{w}, \mathbf{b})$ satisfying
    \begin{equation}\label{eq20}
\begin{aligned}
&(v-\overline{v}, u-\overline{u}, \mathbf{w}, v\mathbf{b}) 
    \in C^0([0,+\infty); H^1), 
    \quad u-\overline{u} \in L^2([0,+\infty); H^1), \\
&(v-\overline{v}, \mathbf{w}, v\mathbf{b}) 
    \in L^2([0,+\infty); H^2).
\end{aligned}
\end{equation}
Moreover,
\begin{align}\label{eq21}
    \sup_{x\in \mathbb{R}}\left|(v-\overline{v}, u-\overline{u}, \mathbf{w}, \mathbf{b})\right| \longrightarrow 0 \quad \text{as}~~t\longrightarrow \infty.
\end{align}
\end{theorem}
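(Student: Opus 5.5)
The plan is the anti-derivative (integrated perturbation) method of Matsumura--Nishihara, run in the effective-velocity formulation \eqref{eq12}, following the strategy of \cite{vasseur2016nonlinear, he2020nonlinear}.

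\textbf{Reformulation.} First I rewrite \eqref{eq2} in the variables $(v,h,\mathbf{w},\mathbf{b})$. The momentum equation becomes $h_t+(p(v)+\frac12|\mathbf{b}|^2)_x=0$, and $v$ satisfies $v_t-h_x=(f'(v)v_x)_x$. The magnetic equation is conservative in $v\mathbf{b}$.

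Thanks to the zero mass condition \eqref{eq17}, I introduce the anti-derivatives in the moving frame $\xi=x-\sigma t$:
\[
(\phi,\psi,\mathbf{W},\mathbf{B})(\xi,t)=\int_{-\infty}^{\xi}\bigl(v-\overline{v},\,h-\overline{h},\,\mathbf{w},\,v\mathbf{b}\bigr)\,\mathrm{d}\eta .
\]
Subtracting \eqref{eq15} and integrating gives a closed system:
\[
\phi_t-\sigma\phi_\xi-\psi_\xi=f'(\overline v)\phi_{\xi\xi}+\dots,\qquad
\psi_t-\sigma\psi_\xi+p'(\overline v)\phi_\xi=-F-\tfrac12|\mathbf{b}|^2,
\]
together with the $\mathbf{W},\mathbf{B}$ equations. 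Here $F=p(v)-p(\overline v)-p'(\overline v)\phi_\xi$ is quadratic. Note that $\mathbf{b}=\mathbf{B}_\xi/v$.

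Local existence in the space of \eqref{eq20} is standard. Global existence then follows from a continuation argument once I have the a priori estimate
\[
\sup_t\|(\phi,\psi,\mathbf{W},\mathbf{B})\|_2^2+\int_0^\infty\bigl(\|\sqrt{|\overline v_\xi|}\,\psi\|^2+\|\phi_\xi\|_2^2+\|(\mathbf{W}_\xi,\mathbf{B}_\xi)\|_2^2\bigr)\,\mathrm{d}t\le C\|(V_0,H_0,\mathbf{W}_0,\mathbf{B}_0)\|_2^2,
\]
valid under the smallness assumption $\sup_t\|\cdot\|_2\le\varepsilon$.

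\textbf{Basic $L^2$ estimate (the key step).} The crucial point is that diffusion now sits on $\phi$, not on $\psi$. I multiply the $\phi$-equation by $-p'(\overline v)\phi$ and the $\psi$-equation by $\psi$, and integrate. The hyperbolic cross terms cancel up to $\int p''(\overline v)\overline v_\xi\,\phi\psi$.

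Following \cite{vasseur2016nonlinear, he2020nonlinear}, I expect this is where a weight is needed. I would multiply instead by a carefully chosen weight $a(\overline v)$, e.g.\ one built from $\sigma^2+p'(\overline v)$ or from $H(\overline v)$ in \eqref{eq10}. The goal is for the quadratic form in $(\phi,\psi)$ multiplying $|\overline v_\xi|$ to be negative definite for arbitrary amplitude. This uses $\overline v_\xi<0$ (Lemma~\ref{lemma2.2}) and the convexity \eqref{eq5}, via the relation $\sigma^2(v_--\overline v)+p(v_-)-p(\overline v)>0$ inside the profile.

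Concretely, I would use the profile ODE to rewrite $f'(\overline v)\overline v_\xi$ and eliminate the bad cross term. This yields good terms $\int|\overline v_\xi|(\psi^2+\phi^2)$ plus $\int f'(\overline v)\phi_\xi^2$. Two kinds of terms remain. The nonlinear ones are $O(\varepsilon)$ times the dissipation. The magnetic ones, such as $\int\psi|\mathbf b|^2_\xi$, are cubic in $\mathbf{B}_\xi$.

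For $\mathbf{W},\mathbf{B}$ the system is linear symmetric at leading order: $\mathbf{W}_t-\sigma\mathbf{W}_\xi-\mathbf{B}_\xi/\overline v=\mu\mathbf{W}_{\xi\xi}/\overline v+\dots$, and similarly for $\mathbf{B}$. Multiplying by $\mathbf{W}$ and $\mathbf{B}$, the cross terms cancel up to $\overline v_\xi$-weighted terms. These are controlled using $\mu,\nu>0$ through dissipation of $\mathbf{W}_\xi,\mathbf{B}_\xi$, plus a Poincaré-type weighted inequality, or absorbed since they are linear in the coupling with small coefficient. \textbf{This is the step I expect to be the main obstacle.} The reason is that no smallness of $|v_+-v_-|$ is available, so the weight must handle the full profile. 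If the simple weight fails, I would try $a=\sigma^2+p'(\overline v)$ scaled by $p(\overline v)-p(v_+)$-type factors, as in He--Huang.

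\textbf{Higher-order estimates and conclusion.} I then differentiate the equations and multiply by $-\phi_{\xi\xi}$, $-\psi_{\xi\xi}$, and so on. The dissipation of $\phi_{\xi\xi}$ controls $\psi_\xi$ through the $\psi$-equation, which gives $u-\overline u\in L^2H^1$. This works because $h-\overline h=u-\overline u-(f(v)-f(\overline v))_\xi$. Sobolev smallness keeps $v$ bounded away from $0$ and $\infty$, and closes the bootstrap for small $\delta_0$.

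Finally, for the convergence \eqref{eq21}: since $\|(\phi_\xi,\mathbf{W}_\xi,\mathbf{B}_\xi)\|_1^2\in L^1(0,\infty)$ and its time derivative is integrable, I get $\|(\phi_\xi,\mathbf{W}_\xi,\mathbf{B}_\xi)\|\to0$. The bound $\|g\|_\infty\le\|g\|^{1/2}\|g_\xi\|^{1/2}$ then gives uniform decay. The same argument applies to $u-\overline u$ and to $\mathbf b=\mathbf B_\xi/v$.
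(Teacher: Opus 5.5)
Your overall architecture (antiderivatives in the effective-velocity variables, then a priori estimates and continuation) matches the paper's. The genuine gap is the basic $L^2$ estimate, the step you yourself flag as the main obstacle.

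\textbf{The $(\phi,\psi)$ estimate.} Your multiplier $(-p'(\overline v)\phi,\ \psi)$ is exactly $-p'(\overline v)$ times the one that works. This non-constant factor moves the weight off $\psi$ and onto the diffused variable $\phi$, so you lose the good $\psi^2$ term and gain bad ones. On the dissipative side the identity contains $\int p''(\overline v)|\overline v_\xi|\bigl(\phi\psi-\tfrac{|\sigma|}{2}\phi^2\bigr)$, which has no $\psi^2$ part and is indefinite. It also contains the viscous commutator $-\int f'(\overline v)p''(\overline v)\overline v_\xi\,\phi\phi_\xi$. This is the same kind of term that forced the smallness assumptions of Matsumura--Nishihara and Ding--Yin.

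Your rescue by an unspecified weight built from $\sigma^2+p'(\overline v)$ or $H(\overline v)$ is never carried out. Moreover, any non-constant weight on $\phi$ brings that commutator back. So the asserted good term $\int|\overline v_\xi|(\phi^2+\psi^2)$ is unsupported.

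The missing idea, which the paper uses in Lemma~\ref{lemma4.5}, is to multiply the $\phi$-equation by $\phi$ and the $\psi$-equation by $-\psi/p'(\overline v)$. Then:
\begin{itemize}
\item the coupling becomes the exact derivative $-(\phi\psi)_\xi$;
\item the divergence-form diffusion $(f'(\overline v)\phi_\xi)_\xi$ gives $\int f'(\overline v)\phi_\xi^2$ with no commutator;
\item the transport term gives $-\frac{\sigma}{2}\int(1/p'(\overline v))_\xi\psi^2\ge0$, since $\sigma<0$, $\overline v_\xi<0$ and $p''>0$.
\end{itemize}
No weight tuning or amplitude condition enters. No weighted $\phi^2$ term is produced, and none is needed.

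\textbf{The magnetic block.} It has the same defect. With multipliers $\mathbf{W},\mathbf{B}$ the coupling leaves $\int(1-\overline v^{-1})\mathbf{W}\cdot\mathbf{B}_\xi$, and the term $\mu\mathbf{W}_{\xi\xi}/\overline v$ leaves a commutator. These are $O(1)$, neither $\overline v_\xi$-weighted nor small for a large shock, so none of your proposed absorptions applies. The paper uses the multipliers $\overline v\mathbf{W}$ and $\mathbf{B}$. This makes the coupling $-(\mathbf{W}\cdot\mathbf{B})_\xi$, turns the $\mathbf{W}$-viscosity into $\mu\int|\mathbf{W}_\xi|^2$, and produces the good term $\frac{\sigma}{2}\int\overline v_\xi|\mathbf{W}|^2\ge0$.

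\textbf{Two smaller points.} First, dissipation of $\psi_\xi$ comes from the $\phi$-equation, not the $\psi$-equation. You solve the $\phi$-equation for $\psi_\xi$, write $\phi_t\psi_\xi=(\phi\psi_\xi)_t-\phi\psi_{\xi t}$, and substitute the differentiated $\psi$-equation, as in Lemmas~\ref{lemma4.7} and \ref{lemma4.9}. Second, uniform decay of $u-\overline u=\psi_\xi+(f(v)-f(\overline v))_\xi$ by your interpolation argument would require $\sup_t\|\phi_{\xi\xi\xi}(t)\|<\infty$. An $H^2$-level estimate does not provide this, which is why the paper instead returns to the parabolic equation \eqref{eq85} for $z=u-\overline u$.
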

\begin{remark}
    In comparison to the results of \cite{ding2021asymptotic}, in the present work we are relaxing the conditions on shock strength and shock speed in order to study the stability of viscous shock even these results are valid for a general pressure law $p(v)$ and bulk viscosity $\lambda(v)$ as a smooth function of specific volume $v$.
\end{remark}
\noindent
\textbf{Notations:} Hereafter, $C$ represents a generic positive constant which may change from line to line, depends only upon the initial data and physical parameters but is independent of time $t$. We denote by $L^p(\mathbb{R})$, $H^s(\mathbb{R})$ the usual Lebesgue space and Sobolev space over $\mathbb{R}$ equipped with the norms $\lVert .\rVert_{L^p}$ and  $\lVert .\rVert_{H^s}$, respectively. For the sake of simplicity, $\lVert.\rVert:= \lVert.\rVert_{L^2}$ and $\lVert.\rVert_s:= \lVert.\rVert_{H^s}$.
\section{Reformulation of the problem}\label{sec3}
Now let us first rewrite the full system \eqref{eq2} in the new effective unknown variable $h$, then in order to do further analysis we consider the transformation as $(x,t)\longmapsto (\xi, t) = (x-\sigma t,\, t)$. Under this transformation the system \eqref{eq2} becomes
\begin{align}\label{eq22}
\begin{cases}
 v_t - \sigma v_\xi-h_\xi = \left(f'(v)v_\xi\right)_\xi, \\[0.3cm]
 h_t - \sigma h_\xi+\left(p(v)+\frac{1}{2}\left|\mathbf{b}\right|^2\right)_\xi 
=   0,  \\[0.3cm]
 \mathbf{w}_t - \sigma \mathbf{w}_\xi -  \mathbf{b} _\xi
=   \mu\left( \dfrac{ \mathbf{w}_\xi}{v} \right)_\xi, \\[0.3cm]
(v \mathbf{b})_t - \sigma (v \mathbf{b})_\xi -  \mathbf{w}_\xi
=   \nu\left( \dfrac{\mathbf{b}_\xi}{v} \right)_\xi,
\end{cases}
\end{align}
with initial data
\begin{align}\label{eq23}
    (v, h, \mathbf{w}, \mathbf{b})|_{t=0}= \left(v_0(x), h_0(x), \mathbf{w}_0(x), \mathbf{b}_0(x)\right).
\end{align}
In view of stability analysis, we now define the perturbation variables $(V, H, \mathbf{W}, \mathbf{B})(\xi,t)$ as
\begin{align}\label{eq24}
\begin{cases}
\displaystyle V(\xi,t) = \int_{-\infty}^{\xi} \left[v(\eta,t) - \overline{v}(\eta)\right]\,\text{d}\eta, 
& \displaystyle H(\xi,t) = \int_{-\infty}^{\xi} \left[h(\eta,t) - \overline{h}(\eta)\right]\,\text{d}\eta, \\[0.65cm]
\displaystyle \mathbf{W}(\xi,t) = \int_{-\infty}^{\xi} \mathbf{w}(\eta,t)\,\text{d}\eta, 
& \displaystyle \mathbf{B}(\xi,t) = \int_{-\infty}^{\xi} (v \mathbf{b})(\eta,t)\,\text{d}\eta.
\end{cases}
\end{align}
By virtue of \eqref{eq15}, $(V, H, \mathbf{W}, \mathbf{B})(\xi,t)$ satisfies
\begin{align}\label{eq25}
\begin{cases}
 V_t - \sigma V_\xi-H_\xi = \left(f'(v)v_\xi-f'(\bar{v})\bar{v}_\xi\right)_\xi, \\[0.3cm]
 H_t - \sigma H_\xi+\left(p(v)-p(\bar{v})+\frac{1}{2}\left|\mathbf{b}\right|^2\right)_\xi 
=   0,  \\[0.3cm]
 \mathbf{W}_t - \sigma \mathbf{W}_\xi -  \dfrac{\mathbf{B} _\xi}{v}
=   \mu \dfrac{ \mathbf{W}_{\xi \xi}}{v}, \\[0.3cm]
\mathbf{B}_t - \sigma \mathbf{B}_\xi -  \mathbf{W}_\xi
=   \nu \dfrac{1}{v}\left( \dfrac{\mathbf{B}_\xi}{v} \right)_\xi.
\end{cases}
\end{align}
We rewrite the system \eqref{eq25} into the following form
\begin{align}\label{eq26}
\begin{cases}
 V_t - \sigma V_\xi-H_\xi - f'(\bar{v})V_{\xi \xi}- f''(\bar{v})\bar{v}_\xi V_\xi = R_1, \\[0.3cm]
 H_t - \sigma H_\xi+p'(\bar{v})V_\xi
=   R_2,  \\[0.3cm]
 \mathbf{W}_t - \sigma \mathbf{W}_\xi -  \dfrac{\mathbf{B} _\xi}{\bar{v}} - \mu \dfrac{\mathbf{W}_{\xi \xi}}{\bar{v}}
=  R_3, \\[0.3cm]
\mathbf{B}_t - \sigma \mathbf{B}_\xi -  \mathbf{W}_\xi - \dfrac{\nu}{\bar{v}^2}\mathbf{B}_{\xi \xi}- \left(\dfrac{\nu}{2\bar{v}^2}\right)_\xi \mathbf{B}_{\xi}
=   R_4,
\end{cases}
\end{align}
where
\begin{equation}\label{eq27}
    \begin{aligned}
        &R_1= \left(f'(v)- f'(\bar{v})\right)V_{\xi \xi} +\left(f'(v)-f'(\bar{v}) - f''(\bar{v})V_\xi\right)\bar{v}_\xi,\\
        &R_2= -\left(p(v)-p(\bar{v})-p'(\bar{v})(v-\bar{v})\right)-\dfrac{\left|\mathbf{B}_\xi\right|^2}{2v^2},\\
        &R_3= \mu\left(\dfrac{1}{v}-\dfrac{1}{\bar{v}}\right)\mathbf{W}_{\xi \xi}+\left(\dfrac{1}{v}-\dfrac{1}{\bar{v}}\right)\mathbf{B}_{ \xi},\\
        &R_4= \nu\left(\dfrac{1}{v^2}-\dfrac{1}{\bar{v}^2}\right)\mathbf{B}_{\xi \xi}+\dfrac{\nu}{2}\left(\dfrac{1}{v^2}-\dfrac{1}{\bar{v}^2}\right)_\xi\mathbf{B}_\xi\\
    \end{aligned}
\end{equation}
with initial data, from \eqref{eq18} and \eqref{eq19}
\begin{align}\label{eq28}
    \left(V,H,\mathbf{W}, \mathbf{B}\right)|_{t=0} = \left(V_0,H_0,\mathbf{W}_0, \mathbf{B}_0\right)(\xi)\in H^2.
\end{align}
We now look for solution in the functional space $X_\delta(0, T)$, for $0\leq T<\infty$, defined by
\begin{equation}\label{eq29}
    \begin{aligned}
        X_\delta(0, T) =  \Big\{(V,H,\mathbf{W}, \mathbf{B}) &\in C\left(0,T;H^2\right)| H_\xi\in L^2\left(0,T;H^1\right),\, \left(V_\xi, \mathbf{B}_\xi, \mathbf{W}_\xi\right)\in L^2\left(0,T;H^2\right),\\
        &\sup_{0\leq t\leq T}\lVert(V,H,\mathbf{W},\mathbf{B})(t)\rVert_{2}\leq \delta \Big\}
    \end{aligned}
\end{equation}
where $\delta <<1$ is a small constant. By exploring the results from \cite{matsumura2010asymptotic, vasseur2016nonlinear}, the construction of global solution is based on local existence and \emph{a priori}  estimate followed by  continuation argument. Since by exploring contraction mapping theory we can establish the local existence, it suffices to prove the following estimates
\begin{proposition}[A priori estimate]\label{proposition3.1}
Let $(V,H, \mathbf{W}, \mathbf{B})\in X_\delta(0,T)$ be solution to the Cauchy problem \eqref{eq26}-\eqref{eq28} for some $T>0$. 
Then there exists a positive constant $\delta_0>0$, independent of $T$, such that if
\begin{equation}\label{eq30}
\sup_{t\in[0,T]}\|(V,H, \mathbf{W}, \mathbf{B})(t)\|_2 \le \delta \le \delta_0 ,
\end{equation}
then for any $t\in[0,T]$ the following estimate holds:
\begin{equation}\label{eq31}
\|(V,H, \mathbf{W}, \mathbf{B})(t)\|_2^2
+\int_0^t
\left(
\|(V_\xi, \mathbf{W}_\xi, \mathbf{B}_\xi)(\eta)\|_2^2
+
\|H_\xi(\eta)\|_1^2
\right)\, \text{d}\eta
\le
C_0\,\|(V_0, H_0, \mathbf{W}_0, \mathbf{B}_0)\|_2^2 ,
\end{equation}
where $C_0$ is a constant independent of $T$.
\end{proposition}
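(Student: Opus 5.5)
The plan is to follow the weighted energy method of Matsumura--Mei and Vasseur--Yao for the effective-velocity system, as in He--Huang, and to add decoupled estimates for the transverse variables $(\mathbf{W},\mathbf{B})$. Throughout, smallness of $\delta$ gives $\|(V_\xi,\mathbf{B}_\xi)\|_{L^\infty}\le C\delta$ by Sobolev embedding. Hence $v$ stays in a compact subset of $(0,\infty)$, and the remainders in \eqref{eq27} satisfy $|R_1|\le C(|V_\xi||V_{\xi\xi}|+|V_\xi|^2|\bar v_\xi|)$, $|R_2|\le C(|V_\xi|^2+|\mathbf{B}_\xi|^2)$, and so on. These are cubic in the energy.

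\textbf{Step 1: basic $L^2$ estimate for $(V,H)$.}
Multiply $\eqref{eq26}_1$ by $-p'(\bar v)V$ and $\eqref{eq26}_2$ by $H$, add, and integrate in $\xi$. The cross terms $-p'(\bar v)VH_\xi - p'(\bar v)V_\xi H$ combine into $-(p'(\bar v)VH)_\xi + p''(\bar v)\bar v_\xi VH$, which leaves an awkward term $p''(\bar v)\bar v_\xi VH$. The transport terms produce $\frac{\sigma}{2}\int p''(\bar v)\bar v_\xi V^2$, which has a good sign because $\sigma<0$, $\bar v_\xi<0$ (Lemma \ref{lemma2.2}) and $p''>0$.

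To absorb the $VH$ term, I use the structure of \eqref{eq15}, namely $\sigma\bar h_\xi=p(\bar v)_\xi$, and choose a weight instead of $-p'(\bar v)$. Following He--Huang, I take the weight $a(\bar v)$ so that the quadratic form in $(V,H)$ weighted by $|\bar v_\xi|$ is positive definite. Concretely, I multiply $\eqref{eq26}_2$ by $H/\sigma^2$ and $\eqref{eq26}_1$ by $-p'(\bar v)V/\sigma^2$ plus a suitable combination. I then exploit the Rankine--Hugoniot and Lax inequalities, $-p'(v_-)<\sigma^2<-p'(v_+)$, together with the monotone profile to obtain
\[
\frac{d}{dt}\int \Phi(V,H)\,d\xi+c\int|\bar v_\xi|(V^2+H^2)\,d\xi+c\int V_\xi^2\,d\xi\le C\int(|R_1V|+|R_2H|)\,d\xi .
\]
This step is where no smallness of the shock strength may be used, and I expect it to be the main obstacle. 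If the direct weight fails, I would fall back on the Vasseur--Yao relative entropy/$a$-contraction style weight $a(\bar v)$, chosen as a function of $\bar v$ so that $a'$ compensates the sign of the cross term. The diffusion in $V$ supplied by $f'(\bar v)V_{\xi\xi}$ is what allows this.

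\textbf{Step 2: $(\mathbf{W},\mathbf{B})$ at $L^2$ level.}
Multiply $\eqref{eq25}_3$ by $v\mathbf{W}$ and $\eqref{eq25}_4$ by $\mathbf{B}$. The coupling terms $-\mathbf{B}_\xi\cdot\mathbf{W}-\mathbf{W}_\xi\cdot\mathbf{B}$ form an exact derivative. The term $\sigma v\mathbf{W}\cdot\mathbf{W}_\xi$ leaves $-\frac{\sigma}{2}v_\xi|\mathbf{W}|^2$, and $\frac12 v_t|\mathbf{W}|^2$ leaves $\frac12 v_t|\mathbf{W}|^2$. For the profile part $\bar v_\xi$, the combination equals $-\sigma\bar v_\xi/2\cdot(\dots)$, which I check has the good sign. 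The perturbation part is bounded by $C\delta\|\cdot\|^2$-type cubic terms. Diffusion gives $\mu\|\mathbf{W}_\xi\|^2+\nu\|\mathbf{B}_\xi/v\|^2$. The coupling to Step 1 appears only through $|\mathbf{B}_\xi|^2$ in $R_2$, which is cubic.

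\textbf{Step 3: higher-order estimates.}
For $V_\xi$, I differentiate $\eqref{eq26}_1$ and multiply by $V_\xi$. I also use the effective-velocity trick: from $\eqref{eq26}_2$, $H_\xi = H_t-\sigma H_\xi$-type identities give dissipation of $H_\xi$ by testing $\eqref{eq26}_1$ with $H_\xi$ (or $\eqref{eq26}_2$ with $-V_\xi$). This yields $\int_0^t\|H_\xi\|^2$ bounded by the good terms already controlled. I then differentiate once more for the second derivatives; the parabolic $V$, $\mathbf{W}$, $\mathbf{B}$ equations give $\|(V_\xi,\mathbf{W}_\xi,\mathbf{B}_\xi)\|_2^2$ dissipation and $\|H_{\xi\xi}\|^2$ by the same cross-testing. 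All nonlinear terms are bounded by $C\delta\times$(dissipation). Summing with small weights and absorbing for $\delta_0$ small gives \eqref{eq31}.
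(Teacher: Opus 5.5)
\paragraph{Gap in Step 1.} Your Step 1 has a genuine gap, and it is exactly the step on which amplitude-independence depends. With the multipliers $-p'(\bar v)V$ for $\eqref{eq26}_1$ and $H$ for $\eqref{eq26}_2$, the transport term of $\eqref{eq26}_1$ puts $\sigma\int p'(\bar v)VV_\xi\,d\xi=-\frac{\sigma}{2}\int p''(\bar v)\bar v_\xi V^2\,d\xi$ on the left-hand side. This is $\le 0$, because $\sigma<0$ and $p''(\bar v)\bar v_\xi<0$, so the sign is bad, not good. The cross term $p''(\bar v)\bar v_\xi VH$ is also left over, with no good term available to absorb it. Your repair (``a suitable combination'', a weight tuned through $-p'(v_-)<\sigma^2<-p'(v_+)$, or an $a$-contraction weight as a fallback) is never made concrete. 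So the basic $L^2$ estimate, on which everything else rests, is not established.

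\paragraph{The missing idea.} Put the weight on the equation that carries no viscosity: multiply $\eqref{eq26}_1$ by $V$ and $\eqref{eq26}_2$ by $-H/p'(\bar v)$. Then:
\begin{itemize}
\item the cross terms combine to $-(VH)_\xi$, and $-\sigma VV_\xi$ is an exact derivative;
\item the linear viscous term $-(f'(\bar v)V_\xi)_\xi$ is in divergence form and gives exactly $\int f'(\bar v)V_\xi^2$;
\item the $H$-transport term gives $-\frac{\sigma}{2}\int(1/p'(\bar v))_\xi H^2\ge 0$, since $(1/p'(\bar v))_\xi=-p''(\bar v)\bar v_\xi/p'(\bar v)^2>0$.
\end{itemize}
This yields
\[
\frac12\frac{d}{dt}\int\Big(V^2-\frac{H^2}{p'(\bar v)}\Big)d\xi-\frac{\sigma}{2}\int\Big(\frac{1}{p'(\bar v)}\Big)_\xi H^2\,d\xi+\int f'(\bar v)V_\xi^2\,d\xi=\int R_1V\,d\xi-\int R_2\frac{H}{p'(\bar v)}\,d\xi ,
\]
with only $O(\delta)$ times dissipation on the right. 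No positive-definiteness, no Lax inequality and no $\int|\bar v_\xi|V^2$ term are needed. This exact cancellation is possible only because the weighted equation has no viscous term, and that is the whole reason for passing to the effective velocity (this is the paper's Lemma \ref{lemma4.5}).

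\paragraph{Step 3.} Step 3 needs the same weighted pairing. If you differentiate $\eqref{eq26}_1$ and test only with $V_\xi$ (or $-V_{\xi\xi}$), you produce $\int H_\xi V_{\xi\xi}$. Bounding this by $\epsilon\|V_{\xi\xi}\|^2+C_\epsilon\|H_\xi\|^2$ and then controlling $\int_0^t\|H_\xi\|^2$ by the cross-test, which itself costs $C\int_0^t\|V_{\xi\xi}\|^2$, is circular. The paper instead pairs $-V_{\xi\xi}$ with $H_{\xi\xi}/p'(\bar v)$, so that $\int H_\xi V_{\xi\xi}+\int V_\xi H_{\xi\xi}=0$. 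The remaining term $\int(1/p'(\bar v))_\xi p'(\bar v)V_\xi H_\xi$ is absorbed by half the good $H_\xi^2$ term plus $C\|V_\xi\|^2$, and this is also what yields $\sup_t\|H_\xi\|$ (Lemma \ref{lemma4.6}, and similarly at second order).

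\paragraph{Step 2.} Testing $\eqref{eq25}_4$ with $\mathbf{B}$ does not give only $\nu\|\mathbf{B}_\xi/v\|^2$, since
\[
\int\frac{\nu}{v}\Big(\frac{\mathbf{B}_\xi}{v}\Big)_\xi\cdot\mathbf{B}\,d\xi=-\nu\int\frac{|\mathbf{B}_\xi|^2}{v^2}\,d\xi+\nu\int\frac{v_\xi}{v^3}\,\mathbf{B}\cdot\mathbf{B}_\xi\,d\xi .
\]
The profile part of the last integral is quadratic, not cubic, in the perturbation. It cannot be absorbed, because the $\mathbf{B}$-transport term integrates to zero and gives no good $\int|\bar v_\xi||\mathbf{B}|^2$ term. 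Be aware that the paper's Lemma \ref{lemma4.5}, which multiplies $\eqref{eq26}_4$ by $\mathbf{B}$, also omits the corresponding term $\frac12\int(\nu/\bar v^2)_\xi\,\mathbf{B}\cdot\mathbf{B}_\xi\,d\xi$. So this point needs its own argument in either version.
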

Once we have Proposition \ref{proposition3.1}, the following global solution holds immediately as
\setcounter{theorem}{0}
\begin{theorem}\label{theorem3.1}
Let $(V_0, H_0, \mathbf{W}_0, \mathbf{B}_0) \in H^2$. There exists a positive constant 
$\delta_1 \le \dfrac{\delta_0}{\sqrt{C_0}}$ such that if
\begin{align}\label{eq32}
    \|(V_0, H_0, \mathbf{W}_0, \mathbf{B_0})\|_2 \le \delta_1,
\end{align}
then the system \eqref{eq26}-\eqref{eq28} admits a unique global solution
\begin{align}\label{eq33}
    (V,H, \mathbf{W}, \mathbf{B}) \in X_{\delta_0}(0,\infty),
\end{align}
satisfying
\begin{equation}\label{eq34}
\|(V,H, \mathbf{W}, \mathbf{B})(t)\|_2^2
+\int_0^\infty
\left(
\|(V_\xi, \mathbf{W}_\xi, \mathbf{B}_\xi)(t)\|_2^2
+
\|H_\xi(t)\|_1^2
\right)\, \text{d}t
\le
C_0\,\|(V_0, H_0, \mathbf{W}_0, \mathbf{B}_0)\|_2^2.
\end{equation}
\end{theorem}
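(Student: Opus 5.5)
The proof is the standard continuation argument: local existence combined with Proposition \ref{proposition3.1}. The first step is local well-posedness in $X_\delta(0,T_0)$. For initial data $(V_0,H_0,\mathbf{W}_0,\mathbf{B}_0)\in H^2$ with $\|(V_0,H_0,\mathbf{W}_0,\mathbf{B}_0)\|_2\le M$, where $M$ is small enough that $v=\bar v+V_\xi$ stays bounded away from zero, there is a time $T_0=T_0(M)>0$ and a unique solution $(V,H,\mathbf{W},\mathbf{B})\in X_{2M}(0,T_0)$. The existence time $T_0$ depends only on $M$, not on the particular data. This is obtained as the paper indicates: contraction mapping for \eqref{eq26}, which is parabolic in $(V,\mathbf{W},\mathbf{B})$ and a transport equation in $H$ with forcing $p'(\bar v)V_\xi+R_2$. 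Positivity of $v$ follows from the Sobolev embedding $\|V_\xi\|_{L^\infty}\le C\|V\|_2$ together with $\inf \bar v=v_+>0$. Uniqueness also comes from the contraction estimate.

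Next I fix $\delta_1\le \delta_0/\sqrt{C_0}$, and also $\delta_1\le\delta_0/2$ so that local existence may be applied with $M=\delta_0/\sqrt{C_0}$. Let $T^*$ be the supremum of the times $T$ for which a solution exists in $X_{\delta_0}(0,T)$. Local existence applied at $t=0$ gives $T^*>0$. Suppose $T^*<\infty$. For any $T<T^*$ the solution satisfies \eqref{eq30} with $\delta=\delta_0$, so Proposition \ref{proposition3.1} yields
\[
\|(V,H,\mathbf{W},\mathbf{B})(t)\|_2^2\le C_0\delta_1^2\le \delta_0^2 ,\qquad 0\le t\le T .
\]
Hence, at a time $t_1=T^*-T_0/2$, the data $(V,H,\mathbf{W},\mathbf{B})(t_1)$ have $H^2$ norm at most $\sqrt{C_0}\,\delta_1$, and this bound is uniform.

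Restarting the local theory at $t_1$ with this uniform bound extends the solution to $[0,t_1+T_0]$, and $t_1+T_0>T^*$. Applying the a priori estimate again keeps the solution within the $\delta_0$-ball on the extended interval. This contradicts the definition of $T^*$, so $T^*=\infty$.

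Inequality \eqref{eq34} follows by letting $T\to\infty$ in \eqref{eq31}, using monotone convergence for the time integrals. Membership in $X_{\delta_0}(0,\infty)$ follows from \eqref{eq34} together with the regularity provided by the local theory. The main obstacle is verifying that the local existence time depends only on the $H^2$ norm bound and the lower bound for $v$. This requires a careful treatment of the hyperbolic $H$-equation coupled with the degenerate-looking nonlinearities $R_1,\dots,R_4$. I would handle it by a standard iteration scheme, estimating $R_i$ in $L^2(0,T;H^1)$ via Sobolev embedding.
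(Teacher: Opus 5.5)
Your route is the paper's route. The paper gives no separate proof, only the remark that Theorem~\ref{theorem3.1} follows from local existence (by contraction mapping) together with Proposition~\ref{proposition3.1} via the usual continuation argument. That is exactly what you carry out, and in more detail than the paper.

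One step does not close as written: the restart at $t_1$.
\begin{itemize}
\item At $t_1$ you only know $\|(V,H,\mathbf{W},\mathbf{B})(t_1)\|_2\le\sqrt{C_0}\,\delta_1$. The local theory therefore places the continued solution in $X_{2\sqrt{C_0}\delta_1}$.
\item Your constraints $\delta_1\le\min\{\delta_0/\sqrt{C_0},\,\delta_0/2\}$ do not give $2\sqrt{C_0}\,\delta_1\le\delta_0$. They do not even give $\sqrt{C_0}\,\delta_1\le M=\delta_0/\sqrt{C_0}$, so the local theory with your $M$ need not apply at $t_1$ at all.
\item Hence hypothesis \eqref{eq30} is not known on $[0,t_1+T_0]$. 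Invoking the a priori estimate there to conclude that the solution stays in the $\delta_0$-ball is circular.
\end{itemize}

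The repair is immediate. Take $M=\delta_0/2$, $T_0=T_0(\delta_0/2)$ and $\delta_1=\delta_0/(2\sqrt{C_0})$. This choice is admissible, and $C_0\ge1$ by \eqref{eq31} at $t=0$. Then the data at $t=0$ and at every restart time have $H^2$ norm at most $\delta_0/2$. Each local piece lies in $X_{\delta_0}$, and Proposition~\ref{proposition3.1} applies directly on the extended interval.

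Alternatively, keep $\sqrt{C_0}\,\delta_1<\delta_0$ strictly and close with a bootstrap using continuity of $t\mapsto\|(V,H,\mathbf{W},\mathbf{B})(t)\|_2$.
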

\section{Energy estimates}\label{sec4}
In this section, we assume that system \eqref{eq26}-\eqref{eq28} has solution locally in time, i.e., $\left(V,H,\mathbf{W}, \mathbf{B}\right)\in X_\delta(0,T)$ , for some $T>0$ and sufficiently small $\delta>0$, i.e., 
\begin{align}\label{eq35}
    \sup_{0\leq t\leq T}\lVert(V,H,\mathbf{W},\mathbf{B})(t)\rVert_{2}\leq \delta.
\end{align}
Then, by the Sobolev inequality, we have
\begin{align}\label{eq36}
    \sup_{0\leq t\leq T}\big\{\|(V, H, \mathbf{W}, \mathbf{B})(t)\|_{L^\infty}+\|(v-\bar{v}, h-\bar{h}, \mathbf{w}, v\mathbf{b})(t)\|_{L^\infty}\big\}\leq\delta.
\end{align}
Before proving Proposition \ref{proposition3.1}, let us first obtain some useful inequalities which will be used frequently later.
\begin{lemma}\label{lemma4.1}
Suppose that assumption \eqref{eq35} holds, then we have
\begin{equation}\label{eq37}
\begin{aligned}
    &|R_1|\leq C\left(|V_\xi||V_{\xi \xi}|+|V_\xi|^2\right),\\
    & |R_{1, \xi}|\leq C\left(V_{\xi \xi}^2+|V_\xi||V_{\xi \xi}|^2+|V_\xi||V_{\xi \xi \xi}|+|V_\xi|^2\right).
\end{aligned}
\end{equation}
\end{lemma}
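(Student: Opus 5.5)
The approach is pointwise Taylor expansion of $f'$ about $\bar v$, using that $v-\bar v = V_\xi$ by \eqref{eq24}. Throughout, $v$ and $\bar v$ stay in a fixed compact subset of $(0,\infty)$: $\bar v$ lies between $v_+$ and $v_-$ by Lemma \ref{lemma2.2}, and $|v-\bar v|\le\delta$ by \eqref{eq36} with $\delta$ small. Since $\lambda$ is smooth and positive, $f$ defined in \eqref{eq11} is smooth, so $f',f'',f'''$ and their derivatives are bounded on this compact set by a constant $C$. We also use that the profile satisfies $|\bar v_\xi|\le C$ and $|\bar v_{\xi\xi}|\le C$. This follows from \eqref{eq10}: $\bar v_\xi=H(\bar v)$ with $H$ smooth on the range of $\bar v$, and $\bar v_{\xi\xi}=H'(\bar v)\bar v_\xi$.

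\textbf{Estimate for $R_1$.} By the mean value theorem, $|f'(v)-f'(\bar v)|\le C|V_\xi|$, so the first term of $R_1$ is bounded by $C|V_\xi||V_{\xi\xi}|$. For the second term, Taylor's formula with remainder gives $f'(v)-f'(\bar v)-f''(\bar v)V_\xi=\tfrac12 f'''(\theta)V_\xi^2$ for some $\theta$ between $v$ and $\bar v$. Multiplying by the bounded factor $\bar v_\xi$ bounds the second term by $C|V_\xi|^2$, which gives the first inequality in \eqref{eq37}.

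\textbf{Estimate for $R_{1,\xi}$.} Differentiate $R_1$ term by term and use $v_\xi=\bar v_\xi+V_{\xi\xi}$.
\begin{itemize}
\item The derivative of the first term is
\[
\big(f''(v)v_\xi-f''(\bar v)\bar v_\xi\big)V_{\xi\xi}+\big(f'(v)-f'(\bar v)\big)V_{\xi\xi\xi}.
\]
Write $f''(v)v_\xi-f''(\bar v)\bar v_\xi=(f''(v)-f''(\bar v))\bar v_\xi+f''(v)V_{\xi\xi}$. The first piece is bounded by $C|V_\xi|$, and after multiplication by $V_{\xi\xi}$ it gives $C|V_\xi||V_{\xi\xi}|\le C(V_\xi^2+V_{\xi\xi}^2)$. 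The second piece gives $CV_{\xi\xi}^2$. The last term is bounded by $C|V_\xi||V_{\xi\xi\xi}|$.
\item For the second term, write $G(v,\bar v)=f'(v)-f'(\bar v)-f''(\bar v)(v-\bar v)$. Its $\xi$-derivative is
\[
\big(f''(v)-f''(\bar v)\big)V_{\xi\xi}+\big(f''(v)-f''(\bar v)-f'''(\bar v)V_\xi\big)\bar v_\xi .
\]
The first part is bounded by $C|V_\xi||V_{\xi\xi}|$, and the second by $C|V_\xi|^2$ after another Taylor expansion. Multiplying $G$ by $\bar v_{\xi\xi}$ contributes a further $C|V_\xi|^2$.
\end{itemize}
Collecting all terms and applying Young's inequality to the cross terms $|V_\xi||V_{\xi\xi}|$ gives a bound by $C(V_{\xi\xi}^2+|V_\xi||V_{\xi\xi\xi}|+|V_\xi|^2)$. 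This already implies the stated bound, since the extra term $|V_\xi||V_{\xi\xi}|^2$ in \eqref{eq37} is nonnegative.

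There is no real obstacle. The only points needing care are confirming that the profile derivatives $\bar v_\xi$ and $\bar v_{\xi\xi}$ are bounded, which \eqref{eq10} supplies, and keeping $v$ away from $0$ so that the derivatives of $f$ are uniformly bounded, which the smallness assumption \eqref{eq36} guarantees.
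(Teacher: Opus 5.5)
Your proposal is correct and takes essentially the paper's route: you differentiate $R_1$ explicitly, and your decomposition of $R_{1,\xi}$ matches the paper's term by term. You then bound each piece with mean-value/Taylor estimates using $v-\bar v=V_\xi$, the smallness \eqref{eq36}, and the boundedness of $\bar v_\xi,\bar v_{\xi\xi}$. You also justify what the paper leaves implicit (the compact range of $v,\bar v$ and the boundedness of $\bar v_{\xi\xi}$ via $H'$), and you get a slightly sharper bound in which the $|V_\xi||V_{\xi\xi}|^2$ term is absorbed into $CV_{\xi\xi}^2$.
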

\begin{proof}
    From $\eqref{eq27}_1$ we have 
    \begin{equation*}
        \begin{aligned}
            &R_1= \left(f'(v)- f'(\bar{v})\right)V_{\xi \xi} +\left(f'(v)-f'(\bar{v}) - f''(\bar{v})V_\xi\right)\bar{v}_\xi,\\[0.3cm]
            & R_{1, \xi} = \left(f'(v)-f'(\bar{v})\right)V_{\xi\xi\xi}+ \left(f'(v)-f'(\bar{v})-f''(\bar{v})V_{\xi}\right)\bar{v}_{\xi\xi} \\[0.3cm] &~~\qquad + \left[\left(f''(v)-f''(\bar{v})\right)V_{\xi\xi}+ \left(f''(v)-f''(\bar{v})\right)\bar{v}_{\xi}+ f''(\bar{v})V_{\xi\xi}\right]V_{\xi\xi} \\[0.3cm]&~~\qquad + \left[\left(f''(v)-f''(\bar{v})\right)V_{\xi\xi}+ \left(f''(v)-f''(\bar{v})-f'''(\bar{v})V_{\xi}\right)\bar{v}_{\xi}\right]\bar{v}_{\xi}. 
        \end{aligned}
    \end{equation*}
    Taking into account \eqref{eq36} and the boundedness of the travelling wave, i.e., $|\bar{v}_\xi|$, $|\bar{v}_{\xi \xi}|$, the proof Lemma follows immediately.  
\end{proof}
\begin{lemma}\label{lemma4.2}
Suppose that assumption \eqref{eq35} holds, then we have
\begin{equation}\label{eq38}
\begin{aligned}
    &|R_2|\leq C\left(|V_\xi|^2+|\mathbf{B}_\xi|^2\right),\\[0.2cm]
    & |R_{2, \xi}|\leq C\left(|V_{\xi \xi}||V_\xi|+ V_\xi^2+|\mathbf{B_\xi}|^2|V_{\xi \xi}|+|\mathbf{B}_\xi|^2+|\mathbf{B}_{\xi\xi}|^2\right),\\[0.2cm]
    & |R_{2, \xi \xi}|\leq C\left(|V_{\xi \xi \xi}||V_\xi|+ V_\xi^2+V_{\xi \xi}^2+|V_\xi||V_{\xi \xi}|^2+|\mathbf{B}_\xi|^2+|\mathbf{B}_{\xi \xi}|^2+|\mathbf{B}_{ \xi \xi}||\mathbf{B}_{ \xi \xi \xi}|+|\mathbf{B}_\xi|^2V_\xi+|\mathbf{B}_\xi|^2V_{\xi \xi}\right).
\end{aligned}
\end{equation}
\end{lemma}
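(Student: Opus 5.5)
We argue exactly as in Lemma \ref{lemma4.1}, by Taylor expansion of $R_2$ around the profile, using \eqref{eq36} and the boundedness of $\bar v$ and its derivatives. First, the integrated variables give $v-\bar v = V_\xi$ and $v\mathbf{b}=\mathbf{B}_\xi$, so $\mathbf{b}=\mathbf{B}_\xi/v$. This is how the magnetic part of $R_2$ is written. Since $\|V_\xi\|_{L^\infty}\le\delta$ is small and $\bar v$ takes values between $v_+$ and $v_-$, both $v$ and $\bar v$ stay in a compact subset of $(0,\infty)$. Hence $p$ and its first three derivatives, and $1/v$, $1/v^2$, $1/v^3$, $1/v^4$, are all bounded by constants depending only on $v_\pm$. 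We also use the fact, from \eqref{eq10} and Lemma \ref{lemma2.2}, that $\bar v_\xi$, $\bar v_{\xi\xi}$ and $\bar v_{\xi\xi\xi}$ are bounded.

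\textbf{Bound on $R_2$.} Write
\[
p(v)-p(\bar v)-p'(\bar v)V_\xi=V_\xi^2\int_0^1(1-\theta)\,p''(\bar v+\theta V_\xi)\,\mathrm{d}\theta .
\]
This gives $|R_2|\le C(|V_\xi|^2+|\mathbf{B}_\xi|^2)$ at once.

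\textbf{Bound on $R_{2,\xi}$.} Differentiate the pressure part:
\[
\partial_\xi\big(p(v)-p(\bar v)-p'(\bar v)V_\xi\big)=\big(p'(v)-p'(\bar v)\big)V_{\xi\xi}+\big(p'(v)-p'(\bar v)-p''(\bar v)V_\xi\big)\bar v_\xi .
\]
Both brackets are bounded by $C|V_\xi|$, and the second one even by $C|V_\xi|^2$. So this term is $\le C(|V_\xi||V_{\xi\xi}|+V_\xi^2)$.

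For the magnetic part,
\[
\partial_\xi\frac{|\mathbf{B}_\xi|^2}{2v^2}=\frac{\mathbf{B}_\xi\cdot\mathbf{B}_{\xi\xi}}{v^2}-\frac{|\mathbf{B}_\xi|^2(\bar v_\xi+V_{\xi\xi})}{v^3}.
\]
Young's inequality gives $|\mathbf{B}_\xi||\mathbf{B}_{\xi\xi}|\le \tfrac12(|\mathbf{B}_\xi|^2+|\mathbf{B}_{\xi\xi}|^2)$. The remaining terms are $|\mathbf{B}_\xi|^2|\bar v_\xi|\le C|\mathbf{B}_\xi|^2$ and $|\mathbf{B}_\xi|^2|V_{\xi\xi}|$. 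This is the stated bound.

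\textbf{Bound on $R_{2,\xi\xi}$.} Differentiate once more.

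The pressure part produces the following terms:
\begin{itemize}
\item $(p'(v)-p'(\bar v))V_{\xi\xi\xi}$, which is $\le C|V_\xi||V_{\xi\xi\xi}|$;
\item $p''(v)(\bar v_\xi+V_{\xi\xi})V_{\xi\xi}-p''(\bar v)\bar v_\xi V_{\xi\xi}$. Here $(p''(v)-p''(\bar v))\bar v_\xi V_{\xi\xi}$ is $\le C|V_\xi||V_{\xi\xi}|$, while $p''(v)V_{\xi\xi}^2$ is $\le CV_{\xi\xi}^2$;
\item the derivative of the second-order remainder times $\bar v_\xi$, together with $\bar v_{\xi\xi}$ times the remainder. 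These give $C(|V_\xi||V_{\xi\xi}|+V_\xi^2)$.
\end{itemize}
Finally, $|V_\xi||V_{\xi\xi}|\le \tfrac12(V_\xi^2+V_{\xi\xi}^2)$ absorbs the mixed products. The $|V_\xi||V_{\xi\xi}|^2$ term is harmless and may be listed as in \eqref{eq38}.

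The magnetic part produces the following terms:
\begin{itemize}
\item $|\mathbf{B}_{\xi\xi}|^2/v^2$;
\item $\mathbf{B}_\xi\cdot\mathbf{B}_{\xi\xi\xi}/v^2$;
\item $\mathbf{B}_\xi\cdot\mathbf{B}_{\xi\xi}\,v_\xi/v^3$;
\item $|\mathbf{B}_\xi|^2 v_{\xi\xi}/v^3$;
\item $|\mathbf{B}_\xi|^2v_\xi^2/v^4$.
\end{itemize}
Here $v_\xi=\bar v_\xi+V_{\xi\xi}$ and $v_{\xi\xi}=\bar v_{\xi\xi}+V_{\xi\xi\xi}$.

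The only delicate point is matching the exact list in \eqref{eq38}. The term $\mathbf{B}_\xi\cdot\mathbf{B}_{\xi\xi\xi}$ is not literally in the list. We control it by $|\mathbf{B}_\xi||\mathbf{B}_{\xi\xi\xi}|$, treated in the same way as the listed $|\mathbf{B}_{\xi\xi}||\mathbf{B}_{\xi\xi\xi}|$; it can be estimated later using smallness of $|\mathbf{B}_\xi|\le\delta$. Similarly, $|\mathbf{B}_\xi|^2V_{\xi\xi\xi}$ is bounded by $\delta|\mathbf{B}_\xi||V_{\xi\xi\xi}|$.

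The remaining terms use smallness from \eqref{eq36} to reduce higher-order products:
\begin{itemize}
\item $|\mathbf{B}_\xi||V_{\xi\xi}|^2\le C|V_{\xi\xi}|^2$;
\item $|\mathbf{B}_\xi||\mathbf{B}_{\xi\xi}||V_{\xi\xi}|\le C(|\mathbf{B}_{\xi\xi}|^2+|\mathbf{B}_\xi|^2V_{\xi\xi})$.
\end{itemize}
We regard this bookkeeping, namely checking that each cubic term is dominated by a listed quadratic one via \eqref{eq36}, as the main (though routine) obstacle. For the subsequent energy estimates, what matters is that every term is quadratic in derivatives of $(V,\mathbf{B})$ up to third order with one factor small.
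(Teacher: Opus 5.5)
\textbf{There is a genuine gap, but it lies in the statement rather than in your computation.} Your treatment of $R_2$ and $R_{2,\xi}$ is correct and follows the paper's route: Taylor expansion about $\bar v$, the smallness \eqref{eq36}, boundedness of $\bar v_\xi,\bar v_{\xi\xi}$, and Young's inequality.

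The problem is the third inequality, and it is not a bookkeeping issue. The expansion behind your term list,
\[
\partial_\xi^2\Big(\frac{|\mathbf{B}_\xi|^2}{2v^2}\Big)=\frac{|\mathbf{B}_{\xi\xi}|^2+\mathbf{B}_\xi\cdot\mathbf{B}_{\xi\xi\xi}}{v^2}-\frac{4\,(\mathbf{B}_\xi\cdot\mathbf{B}_{\xi\xi})\,v_\xi}{v^3}-\frac{|\mathbf{B}_\xi|^2\,v_{\xi\xi}}{v^3}+\frac{3|\mathbf{B}_\xi|^2v_\xi^2}{v^4},
\]
shows that $\eqref{eq38}_3$ is false as stated. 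Take a point where $V_\xi=V_{\xi\xi}=V_{\xi\xi\xi}=0$, $\mathbf{B}_{\xi\xi}=0$, and $\mathbf{B}_{\xi\xi\xi}$ points in the direction of $\mathbf{B}_\xi$. There the pressure part of $R_{2,\xi\xi}$ vanishes and the right-hand side of $\eqref{eq38}_3$ reduces to $C|\mathbf{B}_\xi|^2$. On the other hand, $|R_{2,\xi\xi}|\ge|\mathbf{B}_\xi||\mathbf{B}_{\xi\xi\xi}|/\bar v^2-C|\mathbf{B}_\xi|^2$. Since \eqref{eq35} controls only $H^2$ norms, the pointwise value of $\mathbf{B}_{\xi\xi\xi}$ is unconstrained, so no constant $C$ works. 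The term $|\mathbf{B}_\xi|^2V_{\xi\xi\xi}/v^3$ fails in the same way (same conditions, but with $\mathbf{B}_{\xi\xi\xi}=0$ and $|V_{\xi\xi\xi}|$ large).

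Structurally, $R_2$ depends on $\mathbf{B}$ only through $\mathbf{B}_\xi$. So $\mathbf{B}_{\xi\xi\xi}$ can only appear multiplied by $\mathbf{B}_\xi$, and a product $\mathbf{B}_{\xi\xi}\cdot\mathbf{B}_{\xi\xi\xi}$ can never arise. Hence your step ``we control it by $|\mathbf{B}_\xi||\mathbf{B}_{\xi\xi\xi}|$, treated in the same way as the listed term'' cannot be completed as a proof of \eqref{eq38}. What you actually prove is a corrected estimate: $|\mathbf{B}_{\xi\xi}||\mathbf{B}_{\xi\xi\xi}|$ is replaced by $|\mathbf{B}_\xi||\mathbf{B}_{\xi\xi\xi}|$, and $|\mathbf{B}_\xi|^2|V_{\xi\xi\xi}|$ is added. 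You should state this explicitly rather than present it as matching the list.

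The paper's proof reaches its list only through differentiation slips. Its displayed $R_{2,\xi\xi}$ contains $2h_1(v)(|\mathbf{B}_{\xi\xi}|^2+\mathbf{B}_{\xi\xi}\cdot\mathbf{B}_{\xi\xi\xi})$, whereas $\partial_\xi(\mathbf{B}_\xi\cdot\mathbf{B}_{\xi\xi})=|\mathbf{B}_{\xi\xi}|^2+\mathbf{B}_\xi\cdot\mathbf{B}_{\xi\xi\xi}$. It also drops from the final bound the term $|\mathbf{B}_\xi|^2h_1'(v)V_{\xi\xi\xi}$, which is present in its own expansion and reappears in \eqref{eq68}.

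The correction is harmless downstream. In \eqref{eq68} one has $\int|\mathbf{B}_\xi||\mathbf{B}_{\xi\xi\xi}||H_{\xi\xi}|\,\mathrm{d}\xi\le\lVert\mathbf{B}_\xi\rVert_{L^\infty}\lVert\mathbf{B}_{\xi\xi\xi}\rVert\lVert H_{\xi\xi}\rVert\le C\delta\lVert\mathbf{B}_\xi\rVert_1\lVert\mathbf{B}_{\xi\xi\xi}\rVert$, which is absorbed like the other terms.

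One smaller slip of yours: the bound $|\mathbf{B}_\xi||\mathbf{B}_{\xi\xi}||V_{\xi\xi}|\le C(|\mathbf{B}_{\xi\xi}|^2+|\mathbf{B}_\xi|^2|V_{\xi\xi}|)$ is false pointwise. Young's inequality produces $|\mathbf{B}_\xi|^2V_{\xi\xi}^2$, and $V_{\xi\xi}$ has no $L^\infty$ bound under \eqref{eq35}. Instead bound it by $C\delta|\mathbf{B}_{\xi\xi}||V_{\xi\xi}|\le C(|\mathbf{B}_{\xi\xi}|^2+V_{\xi\xi}^2)$; both terms are in the list.
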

\begin{proof}
    From $\eqref{eq27}_2$ we have 
\begin{equation*}
\begin{aligned}
R_2 &= -\left(p(v)-p(\bar{v})-p'(\bar{v})(v-\bar{v})\right)
      -\frac{|\mathbf{B}_\xi|^2}{2v^2},\\[0.3cm]
R_{2,\xi} &=
-\Big[(p'(v)-p'(\bar{v}))V_{\xi\xi}
+(p'(v)-p'(\bar{v})-p''(\bar{v})V_\xi)\bar{v}_\xi
+|\mathbf{B}_\xi|^2\left(\frac{1}{2v^2}-\frac{1}{2\bar{v}^2}\right)_\xi \\
&\qquad \qquad \qquad \qquad \qquad  +\frac{1}{2v^2}(|\mathbf{B}_\xi|^2)_\xi
+|\mathbf{B}_\xi|^2\left(\frac{1}{2\bar{v}^2}\right)_\xi
\Big].
\end{aligned}
\end{equation*}
For the sake of convenience define $h_1(v):= \frac{1}{2v^2}$ then  
\begin{equation*}
    \begin{aligned}
        R_{2,\xi} &= -\Big[(p'(v)-p'(\bar{v}))V_{\xi\xi}
+(p'(v)-p'(\bar{v})-p''(\bar{v})V_\xi)\bar{v}_\xi
+|\mathbf{B}_\xi|^2\left(h_1'(v)V_{\xi \xi}- \bar{v}_\xi(h_1'(v)-h_1'(\bar{v}))\right) \\
&\qquad \qquad \qquad \qquad \qquad \qquad\qquad +h_1(v)(2\mathbf{B}_\xi\,.\, \mathbf{B_{\xi \xi}})
+|\mathbf{B}_\xi|^2h_1(\bar{v})_\xi
\Big]
    \end{aligned}
\end{equation*}
\begin{equation*}
    \begin{aligned}
 R_{2, \xi \xi} = &-\bigg[(p'(v)-p'(\bar{v}))V_{\xi\xi\xi}
+\left(p'(v)-p'(\bar{v})-p''(\bar{v})V_{\xi}\right)\bar{v}_{\xi\xi} \\[0.2cm]
&+\left[\left(p''(v)-p''(\bar{v})\right)V_{\xi\xi}
+\left(p''(v)-p''(\bar{v})\right)\bar{v}_{\xi}
+p''(\bar{v})V_{\xi\xi}\right]V_{\xi\xi}\\[0.2cm]
&+\left[\left(p''(v)-p''(\bar{v})\right)V_{\xi\xi}
+\left(p''(v)-p''(\bar{v})-p'''(\bar{v})V_{\xi}\right)\bar{v}_{\xi}\right]\bar{v}_{\xi}\\[0.2cm]
&+\left(h_1'(v)V_{\xi \xi}- \bar{v}_\xi(h_1'(v)-h_1'(\bar{v}))\right)(2\mathbf{B}_\xi\,.\, \mathbf{B_{\xi \xi}})+h_1'(v)\left(V_{\xi \xi}+\bar{v}_\xi\right)\left(2\mathbf{B}_\xi\,.\, \mathbf{B}_{\xi \xi}\right)\\[0.2cm]
& + \left|\mathbf{B}_\xi\right|^2\left[h_1'(v)V_{\xi \xi \xi}+h_1'(v)V_{\xi \xi}^2+(h_1'(v)\bar{v}_\xi-h_1''(v)\bar{v}_\xi) V_{\xi \xi}-\bar{v}_{\xi \xi}\left(h_1'(v)-h_1'(\bar{v})\right)+\left(\bar{v}_\xi^2(h_1''(v)-h_1''(\bar{v}))\right)\right]\\[0.2cm]
& +2h_1(v)\left(|\mathbf{B}_{\xi \xi}|^2+\mathbf{B}_{\xi \xi}\, .\, \mathbf{B}_{\xi \xi \xi}\right)+ h_1(\bar{v})_{\xi \xi}|\mathbf{B}_{\xi \xi}|^2+ h_1(\bar{v})_\xi(2\mathbf{B}_\xi\,.\, \mathbf{B_{\xi \xi}})\bigg].
    \end{aligned}
\end{equation*}
By considering \eqref{eq36}, together with the boundedness of the travelling wave derivatives, namely
$|\bar{v}_\xi|$, $|\bar{v}_{\xi\xi}|$ and applying Young's inequality, the proof of  Lemma follows immediately.  
\end{proof}
\begin{lemma}\label{lemma4.3}
Suppose that assumption \eqref{eq35} holds, then we have
\begin{equation}\label{eq39}
\begin{aligned}
    &|R_3|\leq C\left(|V_\xi||\mathbf{W}_{\xi \xi}|+|V_\xi||\mathbf{B}_\xi|\right),\\
    & |R_{3, \xi}|\leq C\left(|V_\xi||\mathbf{W}_{\xi \xi \xi}|+|V_{\xi \xi}||W_{\xi \xi}|+|\bar{v}_\xi||V_{\xi}||\mathbf{W}_{\xi \xi}|+|V_\xi||\mathbf{B}_{\xi \xi}|+|V_{\xi \xi}||\mathbf{B}_{ \xi}|+|\bar{v}_\xi||V_{\xi}||\mathbf{B}_{\xi}|\right).
\end{aligned}
\end{equation}
\end{lemma}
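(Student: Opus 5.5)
The plan follows the pattern of Lemmas \ref{lemma4.1} and \ref{lemma4.2}: write $R_3$ and its $\xi$-derivative explicitly, then bound each factor using \eqref{eq36} and the boundedness of the travelling wave. The one structural fact needed is that $v-\bar v=V_\xi$ by \eqref{eq24}. By \eqref{eq36} and Lemma \ref{lemma2.2}, $\bar v$ lies between $v_+$ and $v_-$, so $\bar v\ge v_+>0$. Taking $\delta\le v_+/2$ gives $v\ge v_+/2$ as well. Hence $1/v$, $1/\bar v$ and all their derivatives in $v$ are bounded by a constant $C$ depending only on $v_\pm$.

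For the first estimate, the mean value theorem gives
\begin{equation*}
\left|\frac1v-\frac1{\bar v}\right|=\frac{|v-\bar v|}{v\bar v}\le C|V_\xi|.
\end{equation*}
Inserting this into the definition of $R_3$ in \eqref{eq27} yields $|R_3|\le C(|V_\xi||\mathbf W_{\xi\xi}|+|V_\xi||\mathbf B_\xi|)$ directly.

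For the derivative, differentiate the two products in $R_3$:
\begin{equation*}
R_{3,\xi}=\mu\Big(\frac1v-\frac1{\bar v}\Big)\mathbf W_{\xi\xi\xi}+\mu\Big(\frac1v-\frac1{\bar v}\Big)_\xi\mathbf W_{\xi\xi}+\Big(\frac1v-\frac1{\bar v}\Big)\mathbf B_{\xi\xi}+\Big(\frac1v-\frac1{\bar v}\Big)_\xi\mathbf B_\xi .
\end{equation*}
The terms without a derivative on the coefficient are handled as before, giving $|V_\xi||\mathbf W_{\xi\xi\xi}|$ and $|V_\xi||\mathbf B_{\xi\xi}|$.

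The only step needing care is the derivative of the coefficient. Using $v_\xi=\bar v_\xi+V_{\xi\xi}$,
\begin{equation*}
\Big(\frac1v-\frac1{\bar v}\Big)_\xi=-\frac{V_{\xi\xi}}{v^2}-\bar v_\xi\Big(\frac1{v^2}-\frac1{\bar v^2}\Big).
\end{equation*}
The first part is bounded by $C|V_{\xi\xi}|$. For the second part, apply the mean value theorem to $1/v^2$, which gives $|1/v^2-1/\bar v^2|\le C|V_\xi|$, so this term is at most $C|\bar v_\xi||V_\xi|$. Multiplying by $\mathbf W_{\xi\xi}$ and $\mathbf B_\xi$ produces exactly the remaining four terms in the second line of \eqref{eq39}. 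This completes the bound without using any smallness beyond the lower bound on $v$.
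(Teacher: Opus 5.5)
Your proof is correct and follows the same route as the paper: write $R_{3,\xi}$ out explicitly (the paper does this via $h_2(v)=1/v$), bound $1/v-1/\bar v$ and $1/v^2-1/\bar v^2$ by $C|V_\xi|$ using $v-\bar v=V_\xi$ and the positive lower bound on $v,\bar v$, and keep $|\bar v_\xi|$ as an explicit factor. Your splitting $(1/v-1/\bar v)_\xi=-V_{\xi\xi}/v^2-\bar v_\xi(1/v^2-1/\bar v^2)$ is in fact a cleaner version of the paper's displayed formula for $R_{3,\xi}$, which has misplaced parentheses.
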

\begin{proof}
    From $\eqref{eq27}_3$ we have 
    \begin{equation*}
        \begin{aligned}
            &R_3= \mu\left(\dfrac{1}{v}-\dfrac{1}{\bar{v}}\right)\mathbf{W}_{\xi \xi}+\left(\dfrac{1}{v}-\dfrac{1}{\bar{v}}\right)\mathbf{B}_{ \xi}.\\
        \end{aligned}
    \end{equation*}
    Define  $h_2(v) = \frac{1}{v}$ and differentiating $R_3$ once with respect to $\xi$ we have
    \begin{equation*}
        \begin{aligned}
            R_{3, \xi} = &\mu\left(h_2(v)-h_2(\bar{v})\right)\mathbf{W}_{\xi \xi \xi}+\mu\left[\left(h_2'(v)-h_2'(\bar{v})\right)V_{\xi \xi}+\left(h_2'(v)-h_2'(\bar{v})\bar{v}_\xi+h_2'(\bar{v}V_{\xi \xi})\right)\right]\mathbf{W}_{\xi \xi}\\[0.2cm]
            & +\left(h_2(v)-h_2(\bar{v})\right)\mathbf{B}_{\xi \xi}+\left[\left(h_2'(v)-h_2'(\bar{v})\right)V_{\xi \xi}+\left(h_2'(v)-h_2'(\bar{v})\bar{v}_\xi+h_2'(\bar{v}V_{\xi \xi})\right)\right]\mathbf{B}_{\xi}
        \end{aligned}
    \end{equation*}
    Taking into account \eqref{eq36} and the boundedness of the travelling wave leads to the proof of the Lemma. 
\end{proof}
\begin{lemma}\label{lemma4.4}
Suppose that assumption \eqref{eq35} holds, then we have
\begin{equation}\label{eq40}
\begin{aligned}
    &|R_4|\leq C\left(|V_\xi||\mathbf{B}_{\xi \xi}|+|V_{\xi \xi}||\mathbf{B}_{\xi}|+|V_\xi||\mathbf{B}_{\xi}|\right),\\
    & |R_{4, \xi}|\leq C\left(|V_{\xi \xi}||\mathbf{B}_{\xi \xi}|+|V_\xi||\mathbf{B}_{\xi \xi}|+|V_\xi||\mathbf{B}_{\xi \xi \xi}|+|\mathbf{B}_{\xi}||V_{\xi \xi \xi}|+|V_{\xi \xi}|^2|\mathbf{B}_\xi|+|V_{\xi \xi}||\mathbf{B}_{\xi}|+|V_\xi||\mathbf{B}_\xi|\right).
\end{aligned}
\end{equation}
\end{lemma}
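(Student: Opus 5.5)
This lemma is a pointwise bound, proved the same way as Lemmas \ref{lemma4.1}--\ref{lemma4.3}. We write $R_4$ in terms of an auxiliary smooth function and use the smallness \eqref{eq36} together with the boundedness of $\bar v$ and its derivatives. Set $h_3(v):=\frac{1}{v^2}$, so that
\begin{align*}
R_4=\nu\left(h_3(v)-h_3(\bar v)\right)\mathbf{B}_{\xi\xi}+\frac{\nu}{2}\left(h_3(v)-h_3(\bar v)\right)_\xi\mathbf{B}_\xi .
\end{align*}
Since $v-\bar v=V_\xi$, and since by \eqref{eq36} and Lemma \ref{lemma2.2} both $v$ and $\bar v$ lie in a fixed compact subset of $(0,\infty)$ for $\delta$ small, the mean value theorem gives $|h_3(v)-h_3(\bar v)|\le C|V_\xi|$. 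For the derivative we write
\begin{align*}
\left(h_3(v)-h_3(\bar v)\right)_\xi=h_3'(v)V_{\xi\xi}+\left(h_3'(v)-h_3'(\bar v)\right)\bar v_\xi ,
\end{align*}
which is bounded by $C\left(|V_{\xi\xi}|+|V_\xi|\right)$ because $|\bar v_\xi|\le C$. Substituting these bounds gives the first inequality in \eqref{eq40}.

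For $R_{4,\xi}$ we differentiate once more:
\begin{align*}
R_{4,\xi}=\nu\left(h_3(v)-h_3(\bar v)\right)\mathbf{B}_{\xi\xi\xi}+\frac{3\nu}{2}\left(h_3(v)-h_3(\bar v)\right)_\xi\mathbf{B}_{\xi\xi}+\frac{\nu}{2}\left(h_3(v)-h_3(\bar v)\right)_{\xi\xi}\mathbf{B}_\xi .
\end{align*}
The first two terms are bounded as before, by $|V_\xi||\mathbf{B}_{\xi\xi\xi}|$ and by $\left(|V_{\xi\xi}|+|V_\xi|\right)|\mathbf{B}_{\xi\xi}|$ respectively. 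For the second derivative we use $v_\xi=V_{\xi\xi}+\bar v_\xi$ and expand:
\begin{align*}
\left(h_3(v)-h_3(\bar v)\right)_{\xi\xi}&=h_3'(v)V_{\xi\xi\xi}+h_3''(v)V_{\xi\xi}^2+2h_3''(v)V_{\xi\xi}\bar v_\xi+\left(h_3''(v)-h_3''(\bar v)\right)\bar v_\xi^2\\
&\quad+\left(h_3'(v)-h_3'(\bar v)\right)\bar v_{\xi\xi}.
\end{align*}
The mean value theorem bounds the last two terms by $C|V_\xi|$. Using the boundedness of $\bar v_\xi$ and $\bar v_{\xi\xi}$, the whole expression is then bounded by $C\left(|V_{\xi\xi\xi}|+|V_{\xi\xi}|^2+|V_{\xi\xi}|+|V_\xi|\right)$. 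Multiplying by $|\mathbf{B}_\xi|$ produces exactly the terms $|\mathbf{B}_\xi||V_{\xi\xi\xi}|$, $|V_{\xi\xi}|^2|\mathbf{B}_\xi|$, $|V_{\xi\xi}||\mathbf{B}_\xi|$ and $|V_\xi||\mathbf{B}_\xi|$ in \eqref{eq40}.

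Nothing here is a genuine obstacle. The one point that needs care is the second-derivative term. There we must keep the cross term $2h_3''(v)V_{\xi\xi}\bar v_\xi$ and make sure every piece of $\left(h_3(v)-h_3(\bar v)\right)_{\xi\xi}$ without a perturbation factor cancels, so that no term of the form $|\bar v_\xi|^2|\mathbf{B}_\xi|$ is left over. The grouping above does this, since each piece is a difference of $h_3$-derivatives at $v$ and $\bar v$, or carries a factor $V_{\xi\xi}$ or $V_{\xi\xi\xi}$. We also need the bounds on $\bar v_\xi$ and $\bar v_{\xi\xi}$; these follow from \eqref{eq10}, since $\bar v_\xi=H(\bar v)$ with $H$ smooth on $[v_+,v_-]$.
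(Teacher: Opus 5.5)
Your proof is correct and follows the same route as the paper. Like the paper, you introduce $h_3(v)=1/v^2$, write $R_4$ in terms of $h_3(v)-h_3(\bar v)$ and its $\xi$-derivative, differentiate once more, and bound each piece using \eqref{eq36}, the mean value theorem and the boundedness of $\bar v_\xi,\bar v_{\xi\xi}$. Your expansion of $(h_3(v)-h_3(\bar v))_{\xi\xi}$ is also more accurate than the paper's displayed one, which writes $h_3'(v)V_{\xi\xi}^2$ for $h_3''(v)V_{\xi\xi}^2$ and garbles the cross term; these slips do not affect the final bound.
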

\begin{proof}
    From $\eqref{eq27}_4$ we have 
    \begin{equation*}
        \begin{aligned}
            &R_4= \nu\left(\dfrac{1}{v^2}-\dfrac{1}{\bar{v}^2}\right)\mathbf{B}_{\xi \xi}+\dfrac{\nu}{2}\left(\dfrac{1}{v^2}-\dfrac{1}{\bar{v}^2}\right)_\xi\mathbf{B}_\xi,\\[0.3cm]
        \end{aligned}
    \end{equation*}
    Define  $h_3(v) := \frac{1}{v^2}$ and rearranging $R_4$ yields,
    \begin{equation*}
        \begin{aligned}
            R_4 = \nu \left(h_3(v)-h_3(\bar{v})\right)\mathbf{B}_{\xi \xi}+\dfrac{\nu}{2}\left(h_3'(v)V_{\xi \xi}+ \bar{v}_\xi(h_3'(v)-h_3'(\bar{v})) \right)\mathbf{B}_\xi
        \end{aligned}
    \end{equation*}
    Now differentiating once,
    \begin{equation*}
        \begin{aligned}
            R_{4, \xi}& = \nu\left(h_3'(v)V_{\xi \xi}+ \bar{v}_\xi(h_3'(v)-h_3'(\bar{v}))\right)\mathbf{B}_{\xi \xi}+ \nu \left(h_3(v)-h_3(\bar{v})\right)\mathbf{B}_{\xi \xi\xi}+\dfrac{\nu\mathbf{B}_{\xi \xi}}{2}\left(h_3'(v)V_{\xi \xi}+ \bar{v}_\xi(h_3'(v)-h_3'(\bar{v})) \right)\\[0.2cm]
            & + \frac{\nu\mathbf{B}_\xi}{2}\left[h_3'(v)V_{\xi \xi \xi}+h_3'(v)V_{\xi \xi}^2+(h_3'(v)\bar{v}_\xi-h_3''(v)\bar{v}_\xi) V_{\xi \xi}-\bar{v}_{\xi \xi}\left(h_3'(v)-h_3'(\bar{v})\right)+\left(\bar{v}_\xi^2(h_3''(v)-h_3''(\bar{v}))\right)\right]
        \end{aligned}
    \end{equation*}
By considering \eqref{eq36}, along with the boundedness of the travelling wave derivatives 
$|\bar{v}_\xi|$, $|\bar{v}_{\xi\xi}|$ and applying Young's inequality, the proof of Lemma follows immediately. 
\end{proof}
With the help of the above inequalities \eqref{eq36}-\eqref{eq40},  we are now going to prove Proposition \ref{proposition3.1} by the following series of Lemmas
\begin{lemma}\label{lemma4.5}
    Suppose that the assumptions of the proposition \ref{proposition3.1} hold, then we have the following estimate for $t\in [0, T]$ 
    \begin{equation}\label{eq41}
        \begin{aligned}
            &\lVert(V, H, \mathbf{W}, \mathbf{B})(t) \rVert^2- \sigma \int_0^t\int_{-\infty}^{\infty} \left(\dfrac{1}{p'(\bar{v})}\right)_\xi H^2 \text{d}\xi \text{d}\tau+ \sigma \int_0^t\int_{-\infty}^{\infty} \bar{v}_\xi\mathbf{W}^2\text{d}\xi \text{d}\tau\\[0.2cm]
            & + \int_0^t\lVert (V_\xi, \mathbf{W}_\xi, \mathbf{B}_\xi)(\tau)\rVert^2 \text{d}\tau \leq C\lVert (V_0, H_0, \mathbf{W}_0, \mathbf{B}_0)\rVert^2 + C\delta \int_0^t \lVert(V_{\xi \xi}, \mathbf{W}_{\xi \xi}, \mathbf{B}_{\xi \xi})(\tau)\rVert \text{d}\tau. 
        \end{aligned}
    \end{equation}
\end{lemma}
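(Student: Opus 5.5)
The plan is a weighted $L^2$ energy estimate on \eqref{eq26}. The weight on the $H$-equation is chosen so that the two hyperbolic coupling terms $-H_\xi$ and $p'(\bar v)V_\xi$ cancel after integration by parts. The diffusive terms then supply the dissipation $\|(V_\xi,\mathbf{W}_\xi,\mathbf{B}_\xi)\|^2$. Concretely, I would multiply $\eqref{eq26}_1$ by $V$, $\eqref{eq26}_2$ by $-H/p'(\bar v)$ (positive, since $p'<0$), $\eqref{eq26}_3$ by $\bar v\,\mathbf{W}$, and $\eqref{eq26}_4$ by $\mathbf{B}$. I would then add the four identities and integrate over $\mathbb{R}\times[0,t]$.

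\textbf{Hyperbolic part.} The cross terms are $-H_\xi V - V_\xi H$, which integrate to zero. The convective terms produce boundary-free weight-derivative terms:
\[
-\sigma\int (-H/p'(\bar v))H_\xi\,\text{d}\xi=-\tfrac{\sigma}{2}\int\Big(\tfrac{1}{p'(\bar v)}\Big)_\xi H^2\,\text{d}\xi ,\qquad
-\sigma\int\bar v\,\mathbf{W}\cdot\mathbf{W}_\xi\,\text{d}\xi=\tfrac{\sigma}{2}\int\bar v_\xi|\mathbf{W}|^2\,\text{d}\xi .
\]
Both are nonnegative. For the first, $\sigma<0$, and $(1/p'(\bar v))_\xi=-p''\bar v_\xi/p'^2>0$ because $\bar v_\xi<0$ by Lemma \ref{lemma2.2}. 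For the second, $\sigma\bar v_\xi>0$. These give exactly the two good terms on the left of \eqref{eq41}.

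\textbf{Diffusive and magnetic coupling.}
\begin{itemize}
\item The $V$-diffusion $-(f'(\bar v)V_{\xi\xi}+f''(\bar v)\bar v_\xi V_\xi)V=-(f'(\bar v)V_\xi)_\xi V$ integrates to $\int f'(\bar v)V_\xi^2$, and $f'=\lambda/v>0$ is bounded below.
\item For $\mathbf{W}$, the term $-\mu\mathbf{W}_{\xi\xi}\cdot\mathbf{W}$ gives $\mu\|\mathbf{W}_\xi\|^2$.
\item The magnetic coupling $-\mathbf{B}_\xi\cdot\mathbf{W}-\mathbf{W}_\xi\cdot\mathbf{B}$ integrates to zero.
\item For $\mathbf{B}$, the operator $-\tfrac{\nu}{\bar v^2}\mathbf{B}_{\xi\xi}-(\tfrac{\nu}{2\bar v^2})_\xi\mathbf{B}_\xi$ is tested by $\mathbf{B}$. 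After integrating by parts this gives $\int\tfrac{\nu}{\bar v^2}|\mathbf{B}_\xi|^2+\int\tfrac12(\tfrac{\nu}{\bar v^2})_\xi\mathbf{B}_\xi\cdot\mathbf{B}$. The leftover term is a lower-order commutator. I would either absorb it, or adjust the weight on $\mathbf{B}$ (e.g. test with $\mathbf{B}$ times a suitable function of $\bar v$) so that it becomes an exact divergence.
\end{itemize}
This is the step I expect to be the main obstacle. The residual $\bar v_\xi\mathbf{B}\cdot\mathbf{B}_\xi$ has no smallness from $\delta$ and no sign. My first attempt would be to match the weights of the $\mathbf{W}$ and $\mathbf{B}$ multipliers so that the linear coupling still cancels, with the weight derivative producing a signed term of the same type as $\sigma\bar v_\xi|\mathbf{W}|^2$. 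Failing that, I would use Young's inequality with the $\mathbf{B}$ dissipation together with the smallness of $\|\bar v_\xi\|_{L^1}$ relative to the diffusion constants.

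\textbf{Nonlinear terms.} The remainders are handled with Lemmas \ref{lemma4.1}--\ref{lemma4.4} and \eqref{eq36}.
\begin{itemize}
\item For $R_1$: $|\int R_1V|\le C\|V\|_{L^\infty}\int(|V_\xi||V_{\xi\xi}|+V_\xi^2)\le C\delta(\|V_\xi\|^2+\|V_{\xi\xi}\|^2)$.
\item For $R_2$: $|\int R_2 H/p'|\le C\delta(\|V_\xi\|^2+\|\mathbf{B}_\xi\|^2)$.
\item $R_3$ and $R_4$ are treated likewise, producing at most $C\delta\|(V_\xi,\mathbf{W}_\xi,\mathbf{B}_\xi)\|_1^2$.
\end{itemize}
The first-order parts are absorbed by the left-hand dissipation for $\delta$ small, leaving only the $C\delta\int_0^t\|(V_{\xi\xi},\mathbf{W}_{\xi\xi},\mathbf{B}_{\xi\xi})\|^2$ term on the right. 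Finally, since $\bar v$ is bounded above and below, the weighted $L^2$ norms are equivalent to $\|(V,H,\mathbf{W},\mathbf{B})\|^2$, and integrating in time yields \eqref{eq41}.
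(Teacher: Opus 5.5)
Your multipliers ($V$, $-H/p'(\bar v)$, $\bar v\mathbf W$, $\mathbf B$) and your handling of the rest match the paper's proof: the cancelling couplings, the two signed convective terms, and the $C\delta$ bounds on $R_1,\dots,R_4$. The problem is the step you flag yourself. It is a genuine gap, and your proposal leaves it open. With $a:=\nu/\bar v^{2}$, testing the $\mathbf B$-operator of $\eqref{eq26}_4$ against $\mathbf B$ gives
\[
\int a|\mathbf B_\xi|^2\,\text{d}\xi+\frac12\int a_\xi\,\mathbf B\cdot\mathbf B_\xi\,\text{d}\xi
=\int a|\mathbf B_\xi|^2\,\text{d}\xi-\frac14\int a_{\xi\xi}|\mathbf B|^2\,\text{d}\xi .
\]
Since $a_\xi=-2\nu\bar v^{-3}\bar v_\xi>0$ and $a_\xi\to 0$ at $\pm\infty$, $a_{\xi\xi}$ must change sign. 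The remainder is therefore a zeroth-order quadratic term in $\mathbf B$ with an $O(1)$ coefficient, no factor $\delta$, and no sign. With the constant-weight multiplier $\mathbf B$ there is no positive term $\int|\bar v_\xi||\mathbf B|^2$ on the left to absorb it. You cannot borrow a fix from the paper: its identity \eqref{eq42} records only $\nu\int\bar v^{-2}|\mathbf B_\xi|^2$ and omits this commutator.

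Neither of your remedies closes the estimate under the lemma's hypotheses.

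\emph{Young's inequality} leaves $C_\epsilon\int\bar v_\xi^2|\mathbf B|^2$. Smallness of $\|\bar v_\xi\|_{L^1}=v_--v_+$ means smallness of the shock strength, which is exactly what is not assumed. Even for a weak shock, the only available bound is $\int w|\mathbf B|^2\le\|w\|_{L^1}\|\mathbf B\|\,\|\mathbf B_\xi\|$ with $w=\bar v_\xi^2$. The available estimates do not bound $\int_0^t\|\mathbf B\|\,\|\mathbf B_\xi\|\,\text{d}\tau$ uniformly in $t$. Nor does any inequality $\int w|\mathbf B|^2\le C\|\mathbf B_\xi\|^2$ with $w\not\equiv0$ hold on $\mathbb R$ (dilate a bump).

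\emph{Re-weighting} fails as well. With multipliers $\bar v\beta(\bar v)\mathbf W$ and $\beta(\bar v)\mathbf B$, the coupling cancels only up to $\int\beta_\xi\,\mathbf B\cdot\mathbf W$, and the $\mathbf B$-diffusion is exact only when $\beta/\bar v$ is constant. Taking $\beta=\bar v$ does produce the good terms $\tfrac{\sigma}{2}\int\bar v_\xi|\mathbf B|^2$ and $\sigma\int\bar v\bar v_\xi|\mathbf W|^2$. But you must then absorb two new sign-indefinite terms:
\begin{itemize}
\item the cross term $\int\bar v_\xi\,\mathbf B\cdot\mathbf W$, which pointwise needs $\sigma^2\bar v>\tfrac12$ along the profile;
\item the $\mathbf W$-diffusion remainder $-\tfrac{\mu}{2}\int\bar v_{\xi\xi}|\mathbf W|^2$, which needs $\mu|\bar v_{\xi\xi}|$ small relative to $|\sigma|\bar v|\bar v_\xi|$.
\end{itemize}
These are precisely the kind of speed or amplitude restrictions the lemma is meant to avoid.

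What is missing is a mechanism that controls $\int(\nu\bar v^{-2})_{\xi\xi}|\mathbf B|^2$ for arbitrary shock strength. Equivalently, you need a positive weighted $L^2$ term in $\mathbf B$ that is compatible with the $\mathbf W$--$\mathbf B$ coupling. Without it, neither your argument nor the paper's (as written) establishes \eqref{eq41}.
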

\begin{proof}
    Multiplying $\eqref{eq26}_1$, $\eqref{eq26}_2$, $\eqref{eq26}_3$ and $\eqref{eq26}_4$ by $V$, $-\frac{H}{p'(\bar{v})}$, $\bar{v}\mathbf{W}$ and $\mathbf{B}$, respectively, summing them up and integrating the resulting equality with respect to $\xi$. Then, doing integration by parts we obtain 
    \begin{equation}\label{eq42}
        \begin{aligned}
            &\dfrac{1}{2}\dfrac{\text{d}}{\text{d}t}\int \left(V^2-\dfrac{H^2}{p'(\bar{v})}  +\bar{v}|\mathbf{W}|^2+|\mathbf{B}|^2\right)\text{d}\xi  -\dfrac{\sigma}{2}\int \left(\dfrac{1}{p'(\bar{v})}\right)_\xi H^2\text{d}\xi \\[0.25cm]
            &+\dfrac{\sigma}{2}\int (\bar{v})_\xi \mathbf{W}^2\text{d}\xi + \int f'(\bar{v})V_\xi^2 \text{d}\xi + \mu \int \mathbf{W}_\xi^2\text{d}\xi   +\nu \int \dfrac{1}{\bar{v}^2}\mathbf{B}_\xi^2\text{d}\xi\\[0.25cm]
            &  = \int R_1 V \text{d}\xi- \int R_2\dfrac{H}{p'(\bar{v})}\text{d}\xi+ \int R_3 \bar{v}\mathbf{W}\text{d}\xi+\int R_4\mathbf{B}\text{d}\xi = \sum_{i=1}^4 I_i.
        \end{aligned}
    \end{equation}
    Now we estimate $I_i$, $i\in \{1,2,3,4\}$, with the help of Lemmas \ref{lemma4.1}-\ref{lemma4.4}, $\eqref{eq35}$ together with the boundedness of the travelling wave namely $|\bar{v}_\xi|$ and $|\bar{v}_{\xi \xi}|$, as follows
    \begin{equation}\label{eq43}
        \begin{aligned}
            |I_1+I_2| &\leq C\int \left(|V_\xi||V_{\xi \xi}|+|V_\xi|^2\right)|V|+(|V_\xi|^2+|\mathbf{B}_{ \xi}|^2)\left|\dfrac{H}{p'(\bar{v})}\right|\text{d}\xi \\[0.2cm]
            & \leq C\delta \int \left(|V_\xi|^2+|V_{\xi \xi}|^2+|\mathbf{B}_\xi|^2\right)\text{d}\xi.
        \end{aligned}
    \end{equation}
    Similarly,  
       \begin{equation}\label{eq44}
        \begin{aligned}
            |I_3+I_4| & \leq C \int\lVert\mathbf{W}\rVert_{L^\infty}(|V_\xi||\mathbf{W}_{\xi \xi}|+|V_\xi||\mathbf{B}_\xi|)   +\lVert H\rVert_{L^\infty}(|V_\xi||\mathbf{B}_{\xi \xi}|+|V_{\xi \xi}||\mathbf{B}_{\xi}|+|V_\xi||\mathbf{B}_{\xi}|)\text{d}\xi\\[0.2cm]
            & \leq C \lVert(H, \mathbf{W} )\rVert_{L^\infty}\int \left(|V_\xi|^2+|\mathbf{B}_\xi|^2+|V_{\xi \xi}|^2+|\mathbf{B}_{\xi \xi}|^2\right)\text{d}\xi\\[0.2cm]
            & \leq C\delta \int \left(|V_\xi|^2+|\mathbf{B}_\xi|^2+|V_{\xi \xi}|^2+|\mathbf{B}_{\xi \xi}|^2\right)\text{d}\xi. 
        \end{aligned}
    \end{equation}
    Substituting \eqref{eq43}-\eqref{eq44} in \eqref{eq42}, then choosing $\delta$ sufficiently small and integrating resulting inequality with respect to time $t$, we obtain \eqref{eq41}. Hence, Lemma \ref{lemma4.5} is proved.
\end{proof}
\begin{remark}
    We recall that in our case we consider the shock which is having speed $\sigma<0$ and from Lemma \ref{lemma2.2} we have $\bar{v}_\xi <0$ which implies $\left(\frac{1}{p'(\bar{v})}\right)_\xi= -\left(\frac{1}{p'(\bar{v})}\right)^2p''(\bar{v})\bar{v}_\xi>0$. Therefore the terms $\sigma \int_0^t\int_{-\infty}^{\infty} \bar{v}_\xi|\mathbf{W}|^2\text{d}\xi \text{d}\tau$ and  $- \sigma \int_0^t\int_{-\infty}^{\infty} \left(\frac{1}{p'(\bar{v})}\right)_\xi H^2 \text{d}\xi \text{d}\tau$ are supporting terms for us  in Lemma \ref{lemma4.5}.
\end{remark}
\begin{lemma}\label{lemma4.6}
    Suppose that the assumptions of the proposition \ref{proposition3.1} hold, then we have the following estimate for $t\in [0, T]$ as
    \begin{equation}\label{eq45}
        \begin{aligned}
            &\lVert(V, H, \mathbf{W}, \mathbf{B})(t) \rVert_1^2+\int_0^t\lVert (V_\xi, \mathbf{W}_\xi, \mathbf{B}_\xi)(\tau)\rVert_1^2  \text{d}\tau
              \leq C\lVert (V_0, H_0, \mathbf{W}_0, \mathbf{B}_0)\rVert_1^2 . 
        \end{aligned}
    \end{equation}
\end{lemma}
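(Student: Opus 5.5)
We build on Lemma \ref{lemma4.5}. What is still needed is control of $\lVert (V_\xi,H_\xi,\mathbf{W}_\xi,\mathbf{B}_\xi)(t)\rVert^2$ together with the dissipation $\int_0^t\lVert (V_{\xi\xi},\mathbf{W}_{\xi\xi},\mathbf{B}_{\xi\xi})\rVert^2\,\text{d}\tau$. This dissipation is also what absorbs the right-hand side term $C\delta\int_0^t\lVert(V_{\xi\xi},\mathbf{W}_{\xi\xi},\mathbf{B}_{\xi\xi})\rVert^2$ of \eqref{eq41}; we read \eqref{eq41} as carrying squared norms, which is what \eqref{eq43}--\eqref{eq44} actually give.

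\emph{Step 1: first-order energy identity.} Differentiate \eqref{eq26} once in $\xi$. Multiply the four resulting equations by $V_\xi$, $-H_\xi/p'(\bar v)$, $\bar v\mathbf{W}_\xi$ and $\mathbf{B}_\xi$, respectively, then sum and integrate. Equivalently, multiply \eqref{eq26} by $-V_{\xi\xi}$, $(H_\xi/p'(\bar v))_\xi$, $-(\bar v\mathbf{W}_\xi)_\xi$ and $-\mathbf{B}_{\xi\xi}$, which avoids third derivatives of $H$.

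The structure of the linear terms is as follows.
\begin{itemize}
\item The coupling $-H_{\xi\xi}V_\xi$ cancels against $p'(\bar v)V_{\xi\xi}\cdot H_\xi/p'(\bar v)$, up to terms weighted by $\bar v_\xi$ or $\bar v_{\xi\xi}$.
\item The $\mathbf{W}$--$\mathbf{B}$ coupling is handled the same way, up to terms carrying $\bar v_\xi$.
\item The transport terms produce $\frac{\sigma}{2}\int(1/p'(\bar v))_\xi H_\xi^2$ and $-\frac{\sigma}{2}\int\bar v_\xi|\mathbf{W}_\xi|^2$, with the good signs already noted in the remark after Lemma \ref{lemma4.5}.
\item The viscous terms yield $\int f'(\bar v)V_{\xi\xi}^2+\mu\int|\mathbf{W}_{\xi\xi}|^2+\nu\int\bar v^{-2}|\mathbf{B}_{\xi\xi}|^2$.
\end{itemize}

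The remaining linear cross terms are of the types
\[
\int \bar v_\xi V_\xi V_{\xi\xi},\qquad \int \bar v_\xi V_\xi H_\xi,\qquad \int \bar v_\xi\, \mathbf{B}_\xi\cdot\mathbf{W}_\xi,\qquad \int (\bar v^{-2})_\xi\, \mathbf{B}_\xi\cdot \mathbf{B}_{\xi\xi}.
\]
These have bounded coefficients but are \emph{not} small, because the shock amplitude is large. Terms containing a second derivative are split by Young's inequality into $\epsilon\lVert(V_{\xi\xi},\mathbf{B}_{\xi\xi})\rVert^2+C_\epsilon\lVert(V_\xi,\mathbf{B}_\xi)\rVert^2$. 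The last part is time integrable by Lemma \ref{lemma4.5}.

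\emph{Step 2: the main obstacle.} The delicate term is $\int\bar v_\xi V_\xi H_\xi$, because $\int_0^t\lVert H_\xi\rVert^2$ is not yet controlled.

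First, I would try to avoid it entirely. Replace the first equation of \eqref{eq26} by its conservative form \eqref{eq25}. The flux $f'(v)v_\xi-f'(\bar v)\bar v_\xi=(f(v)-f(\bar v))_\xi$ is then an exact derivative, so after one differentiation the multiplier $V_\xi$ gives $\int (f(v)-f(\bar v))_\xi V_{\xi\xi}$. This is coercive in $V_{\xi\xi}$ up to a $\bar v_\xi V_\xi$ correction, and no $H_\xi$-weighted term appears.

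If a term with $H_\xi$ does survive, bound it by
\[
C\int|\bar v_\xi|\,|V_\xi|\,|H_\xi| \le \epsilon\int|\bar v_\xi|H_\xi^2 + C_\epsilon\lVert V_\xi\rVert^2 .
\]
The first part is absorbed by the good weighted term $-\sigma\int(1/p'(\bar v))_\xi H_\xi^2$, since $(1/p'(\bar v))_\xi\ge c|\bar v_\xi|$. This is exactly where the large-amplitude analysis relies on the effective velocity $h$: the only diffusion sits in $v$.

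\emph{Step 3: nonlinear terms.} Integrate by parts so that at most one derivative beyond the dissipated order falls on each term. Use the bounds for $R_{i,\xi}$ from Lemmas \ref{lemma4.1}--\ref{lemma4.4} together with \eqref{eq36} to bound everything by
\[
C\delta\lVert (V_\xi,\mathbf{B}_\xi,\mathbf{W}_\xi)\rVert_2^2 .
\]
Terms involving third derivatives are split as $C\delta\lVert V_{\xi\xi\xi}\rVert^2$ etc. These are either deferred to the $H^2$ lemma (which is formally acceptable inside the combined a priori argument) or avoided by choosing the $\xi$-multiplier form above, so that only $R_i$ and not $R_{i,\xi}$ appears, leaving at most second derivatives.

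\emph{Step 4: conclusion.} Add a large multiple of \eqref{eq41} to the integrated Step 1 inequality, choose $\epsilon$ and then $\delta$ small, and absorb all terms. This yields \eqref{eq45}.
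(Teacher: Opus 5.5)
Your proposal is correct and is essentially the paper's proof. It uses the same first-order multipliers; your $(H_\xi/p'(\bar v))_\xi$ in place of the paper's $H_{\xi\xi}/p'(\bar v)$ just produces the cross term directly instead of through $I_{10}$. The cross term $\int \bar v_\xi V_\xi H_\xi$ is absorbed into the good weighted term $-\sigma\int (1/p'(\bar v))_\xi H_\xi^2$ exactly as in \eqref{eq49}, the remaining $O(1)$ linear cross terms are handled by Young's inequality with $\epsilon$ on the second-order factors, and Lemma~\ref{lemma4.5} makes the first-order terms time-integrable.

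The only misstep is your ``avoid it entirely'' aside. That cross term is $\int \frac{p''(\bar v)\bar v_\xi}{p'(\bar v)} V_\xi H_\xi$, left over from the variable coefficient $p'(\bar v)$ in the $H$-equation after the $V_\xi H_{\xi\xi}$ coupling cancels. It does not come from the diffusion in the $V$-equation, so the conservative form does not remove it. Your fallback, however, is precisely what is needed.
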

\begin{proof}
    Multiplying $\eqref{eq26}_1$, $\eqref{eq26}_2$, $\eqref{eq26}_3$ and $\eqref{eq26}_4$ by $-V_{\xi \xi}$, $\frac{H_{\xi \xi}}{p'(\bar{v})}$, $-\bar{v}\mathbf{W}_{\xi \xi}$ and $-\mathbf{B}_{\xi \xi}$, respectively.  Summing them up and integrating the resulting equality with respect to $\xi$. Then, by performing integration by parts, we obtain
    \begin{equation}\label{eq46}
        \begin{aligned}
            &\dfrac{1}{2}\dfrac{\text{d}}{\text{d}t}\int \left(V_\xi^2-\dfrac{H_\xi^2}{p'(\bar{v})}+\bar{v}|\mathbf{W}_\xi|^2+|\mathbf{B}_\xi|^2\right)\text{d}\xi - \dfrac{\sigma}{2}\int \left(\frac{1}{p'(\bar{v})}\right)_\xi H_\xi^2\text{d}\xi\\[0.2cm]
            & +\int f'(\bar{v}) V_{\xi \xi}^2\text{d}\xi\, +\, \mu\int |\mathbf{W}_{\xi \xi}|^2\text{d}\xi\, +\,\int \left(\dfrac{\nu}{\bar{v}^2}\right)|\mathbf{B}_{\xi \xi}|^2\text{d}\xi\\[0.2cm]
            &= \int R_2 \dfrac{H_{\xi \xi}}{p'(\bar{v})}\text{d}\xi-\int R_1V_{\xi \xi}\, \text{d}\xi-\int f''(\bar{v})\bar{v}_\xi V_{\xi \xi}V_\xi \text{d}\xi-\int R_3\bar{v}\mathbf{W}_{\xi \xi}\text{d}\xi- \int R_4\mathbf{B_{\xi \xi}}\text{d}\xi\\[0.2cm]
            & +\int \left(\dfrac{1}{p'(\bar{v})}\right)_\xi H_\xi H_t \text{d}\xi - \int \left(\bar{v}\right)_\xi\mathbf{W}_\xi\,.\,\mathbf{W}_t\text{d}\xi- \int \left(\dfrac{\nu}{2\bar{v}^2}\right)\mathbf{B}_{\xi \xi}\,.\, \mathbf{B}_\xi\text{d}\xi+\dfrac{\sigma}{2}\int (\bar{v})_\xi|\mathbf{W}_\xi|^2\text{d}\xi\\[0.2cm]
            &=: \sum_{i=5}^{13} I_i.
        \end{aligned}
    \end{equation}
   By exploiting Lemmas \ref{lemma4.1}-\ref{lemma4.4}, \eqref{eq35} along with the boundedness of the travelling wave, we estimate $I_i ~(i=5,...,13)$ as follows
   \begin{equation}\label{eq47}
       \begin{aligned}
           |I_6+I_7| &\leq C\int \left(\left(|V_\xi||V_{\xi \xi}|+|V_\xi|^2\right)|V_{\xi \xi}|+|V_\xi||V_{\xi \xi}|\right)\text{d}\xi\\[0.2cm]
           &\leq (\epsilon+C \delta)\int |V_{\xi \xi}|^2\text{d}\xi+ C \int|V_\xi|^2\text{d}\xi.
       \end{aligned}
   \end{equation}
   Note that,
   \begin{equation}\label{eq48}
       \begin{aligned}
           I_5+I_{10} = \int\left(R_2\dfrac{H_\xi}{p'(\bar{v})}\right)_\xi \text{d}\xi- \int R_{2,\xi}\dfrac{H_\xi}{p'(\bar{v})}\text{d}\xi +\int \left(\dfrac{1}{p'(\bar{v})}\right)_\xi p'(\bar{v})V_\xi H_\xi \text{d}\xi
       \end{aligned}
   \end{equation}
   and
   \begin{equation}\label{eq49}
       \begin{aligned}
          &\int \left(\dfrac{1}{p'(\bar{v})}\right)_\xi p'(\bar{v})V_\xi H_\xi \text{d}\xi \leq -\dfrac{\sigma}{4}\int\left(\dfrac{1}{p'(\bar{v})}\right)_\xi H_\xi^2\text{d}\xi +C\int|V_\xi|^2\text{d}\xi,\\[0.2cm]
          &\left|\int R_{2,\xi}\dfrac{H_\xi}{p'(\bar{v})}\text{d}\xi\right| \leq C\int\left(|V_{\xi \xi}||V_\xi|+ V_\xi^2+|\mathbf{B_\xi}|^2|V_{\xi \xi}|+|\mathbf{B}_\xi|^2+|\mathbf{B}_{\xi\xi}|^2\right)|H_\xi|\text{d}\xi\\
          &\qquad \qquad \qquad \quad \leq C\delta \int\left(|V_\xi|^2+|V_{\xi \xi}|^2+|\mathbf{B}_\xi|^2+|\mathbf{B}_{\xi \xi}|^2\right)\text{d}\xi.
       \end{aligned}
   \end{equation}
   \vspace{0.1cm}

From $\eqref{eq26}_2$ 
\begin{equation}\label{eq51}
    \begin{aligned}
        |I_{11}|&= \left|\int \left(\bar{v}\right)_\xi\mathbf{W}_\xi\,.\,\left(R_3+\sigma\mathbf{W}_\xi+\frac{\mathbf{B}_\xi}{\bar{v}}+\mu \frac{\mathbf{W}_{\xi \xi}}{\bar{v}}\right )\text{d}\xi\right|\\[0.2cm]
        &\leq C \int |R_3||\mathbf{W}_\xi|\text{d}\xi+C\int |\mathbf{W}_\xi|^2\text{d}\xi+C\int |\mathbf{W}_\xi||\mathbf{B}_\xi|\text{d}\xi+C\int |\mathbf{W}_\xi||\mathbf{W}_{\xi \xi}|\text{d}\xi\\[0.2cm]
        & \leq C\int \left(|V_\xi||\mathbf{W}_{\xi \xi}|+|V_\xi||\mathbf{B}_\xi|\right)|\mathbf{W}_{\xi}|\text{d}\xi+(C+\epsilon)\int |\mathbf{W}_\xi|^2\text{d}\xi + C_\epsilon\int \left(|\mathbf{B}_\xi|^2+|\mathbf{W}_{\xi \xi}|^2\right)\text{d}\xi\\[0.2cm]
        &\leq C\delta\int |V_\xi|^2\text{d}\xi+C(\delta+C_\epsilon) \int\left(|\mathbf{B}_\xi|^2+|\mathbf{W}_{\xi \xi}|^2\right)\text{d}\xi+(C+\epsilon)\int |\mathbf{W}_\xi|^2\text{d}\xi.
    \end{aligned}
\end{equation}
Meanwhile, one readily obtains
\begin{equation}\label{eq52}
    \begin{aligned}
        |I_{12}+I_{13}|\leq C\int \left(|\mathbf{W_\xi}|^2+|\mathbf{B}_{\xi}|^2+|\mathbf{B}_{ \xi \xi}|^2\right)\text{d}\xi,
    \end{aligned}
\end{equation}
\vspace{0.2cm}
\begin{equation}\label{eq50}
    \begin{aligned}
        |I_8|\leq C\int \left(|V_\xi||\mathbf{W}_{\xi \xi}|+|V_\xi||\mathbf{B}_\xi|\right)|\mathbf{W}_{\xi \xi}|\text{d}\xi \leq C\delta\int(|\mathbf{B}_\xi|^2+|\mathbf{W}_{\xi \xi}|^2)\text{d}\xi.
    \end{aligned}
\end{equation}
Similarly,
\begin{equation}\label{eq53}
    \begin{aligned}
        |I_9|&\leq C\int \left(|V_\xi||\mathbf{B}_{\xi \xi}|+|V_{\xi \xi}||\mathbf{B}_{\xi}|+|V_\xi||\mathbf{B}_{\xi}|\right)|\mathbf{B}_{\xi \xi}|\text{d}\xi\\[0.2cm]
        &\leq C\delta \int \left(|\mathbf{B}_\xi|^2+|V_{\xi \xi}|^2+|\mathbf{B}_{\xi \xi}|^2\right)\text{d}\xi.
    \end{aligned}
\end{equation}
Substituting \eqref{eq47}-\eqref{eq53} in \eqref{eq46} and integrating the resulting inequality with respect to time $t$, we obtain
 \begin{equation}\label{eq54}
        \begin{aligned}
            &\dfrac{1}{2}\int \left(V_\xi^2-\dfrac{H_\xi^2}{p'(\bar{v})}+\bar{v}|\mathbf{W}_\xi|^2+|\mathbf{B}_\xi|^2\right)\text{d}\xi - \dfrac{\sigma}{4}\int_0^t\int \left(\frac{1}{p'(\bar{v})}\right)_\xi H_\xi^2\text{d}\xi\text{d}\tau \\[0.2cm]
            & +\int_0^t\int f'(\bar{v}) V_{\xi \xi}^2\text{d}\xi\text{d}\tau\, +\, \mu\int_0^t\int |\mathbf{W}_{\xi \xi}|^2\text{d}\xi\text{d}\tau\, +\,\int_0^t\int \left(\dfrac{\nu}{\bar{v}^2}\right)|\mathbf{B}_{\xi \xi}|^2\text{d}\xi\text{d}\tau\\[0.2cm]
            &\leq C \lVert\left(V_{0, \xi}, H_{0,\xi}, \mathbf{W}_{0,\xi}, \mathbf{B}_{0,\xi}\right)\rVert^2+(\epsilon+C\delta)\int_0^t\int |V_{\xi \xi}|^2\text{d}\xi\text{d}\tau+C(\delta+2)\int_0^t\int |\mathbf{B}_{\xi \xi}|^2\text{d}\xi\text{d}\tau \\[0.2cm]
            &+C(\delta+C_\epsilon)\int_0^t\int |\mathbf{W}_{\xi \xi}|^2\text{d}\xi\text{d}\tau +C(\delta+1)\int_0^t\int |V_\xi|^2 \text{d}\xi\text{d}\tau + (C+\epsilon)\int_0^t\int|\mathbf{W}_\xi|^2 \text{d}\xi\text{d}\tau\\[0.2cm]           &+C(1+\delta+C_\epsilon)\int_0^t\int|\mathbf{B}_\xi|^2\text{d}\xi\text{d}\tau.\\
        \end{aligned}
    \end{equation}
    Now using Lemma \ref{lemma4.5}, the right-hand side of the inequality \eqref{eq54} becomes
    \begin{equation}\label{eq55}
        \begin{aligned}
            & \leq C \lVert\left(V_{0}, H_{0}, \mathbf{W}_{0}, \mathbf{B}_{0}\right)\rVert_1^2+(\epsilon+C\delta)\int_0^t\int |V_{\xi \xi}|^2\text{d}\xi\text{d}\tau+C(\delta+2)\int_0^t\int |\mathbf{B}_{\xi \xi}|^2\text{d}\xi\text{d}\tau \\[0.2cm]
            & +C(\delta+C_\epsilon)\int_0^t\int |\mathbf{W}_{\xi \xi}|^2\text{d}\xi\text{d}\tau+C(\delta+1)\delta\int_0^t\int |V_{\xi \xi}|^2 \text{d}\xi\text{d}\tau + (C+\epsilon)\delta\int_0^t\int|\mathbf{W}_{\xi \xi}|^2 \text{d}\xi\text{d}\tau\\[0.2cm]            &+C(1+\delta+C_\epsilon)\delta\int_0^t\int|\mathbf{B}_{\xi \xi}|^2\text{d}\xi\text{d}\tau.\\
        \end{aligned}
    \end{equation}
    Choosing suitably small $\epsilon$ and sufficiently small $\delta$, we have
    \begin{equation}\label{eq56}
        \begin{aligned}
           &\int \left(V_\xi^2+H_\xi^2+  |\mathbf{W}_\xi|^2+|\mathbf{B}_\xi|^2\right)\text{d}\xi + \int_0^t\int V_{\xi \xi}^2+ |\mathbf{W}_{\xi \xi}|^2 +|\mathbf{B}_{\xi \xi}|^2 \text{d}\xi\text{d}\tau\\[0.25cm] 
           & \leq C \lVert\left(V_{0}, H_{0}, \mathbf{W}_{0}, \mathbf{B}_{0}\right)\rVert_1^2
        \end{aligned}
    \end{equation}
    and along with Lemma \ref{lemma4.5}, \eqref{eq45} follows immediately. Thus, Lemma \ref{lemma4.6} is proved.
\end{proof}
As we may not have, in general, the positive lower bound for $|\bar{v}_\xi|$ hence it is necessary to deal with $\int_0^t\lVert H_\xi(\tau)\rVert^2\text{d}\tau$ separately. For this we have
\begin{lemma}\label{lemma4.7}
    Suppose that the assumptions of the proposition \ref{proposition3.1} hold, then we have the following estimate for $t\in [0, T]$ as
    \begin{equation}\label{eq57}
        \begin{aligned}
            \int_0^t \lVert H_\xi(\tau)\rVert^2 \text{d}\tau \leq C\lVert (V_0, H_0, \mathbf{W}_0, \mathbf{B}_0)\rVert_1^2 .
        \end{aligned}
    \end{equation}
\end{lemma}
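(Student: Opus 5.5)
The plan is to recover dissipation for $H_\xi$ from the first equation of \eqref{eq26}, which contains $H_\xi$ with no derivative on its coefficient. Isolating $H_\xi$ gives
\begin{equation*}
H_\xi = V_t - \sigma V_\xi - f'(\bar{v})V_{\xi\xi} - f''(\bar{v})\bar{v}_\xi V_\xi - R_1 .
\end{equation*}
I will multiply $\eqref{eq26}_1$ by $-H_\xi$ and integrate over $\xi\in\mathbb{R}$. The only term not already controlled by Lemmas \ref{lemma4.5}--\ref{lemma4.6} is $\int V_t H_\xi\,\text{d}\xi$.

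I will write this term as
\begin{equation*}
\int V_t H_\xi\,\text{d}\xi=\frac{\text{d}}{\text{d}t}\int V H_\xi\,\text{d}\xi-\int V H_{\xi t}\,\text{d}\xi=\frac{\text{d}}{\text{d}t}\int V H_\xi\,\text{d}\xi+\int V_\xi H_t\,\text{d}\xi .
\end{equation*}
I then substitute $H_t=\sigma H_\xi-p'(\bar{v})V_\xi+R_2$ from $\eqref{eq26}_2$. This produces $-\int p'(\bar{v})V_\xi^2\,\text{d}\xi$, which is harmless because it is bounded by $C\|V_\xi\|^2$, and it is actually positive. It also produces $\sigma\int V_\xi H_\xi\,\text{d}\xi\le \tfrac14\|H_\xi\|^2+C\|V_\xi\|^2$ and $\int R_2V_\xi\,\text{d}\xi\le C\delta(\|V_\xi\|^2+\|\mathbf{B}_\xi\|^2)$ by Lemma \ref{lemma4.2}.

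The remaining linear terms are $\sigma V_\xi H_\xi$, $f'(\bar{v})V_{\xi\xi}H_\xi$ and $f''(\bar{v})\bar{v}_\xi V_\xi H_\xi$. Each is bounded via Young by $\tfrac18\|H_\xi\|^2+C\|(V_\xi,V_{\xi\xi})\|^2$, using the boundedness of $\bar{v},\bar{v}_\xi$ and $f'$ on the compact range of $v$. The nonlinear term $\int R_1H_\xi$ is at most $C\delta\|(V_\xi,V_{\xi\xi})\|^2+C\delta\|H_\xi\|^2$ by Lemma \ref{lemma4.1} together with \eqref{eq36}.

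Collecting these bounds gives
\begin{equation*}
\frac12\|H_\xi\|^2\le \frac{\text{d}}{\text{d}t}\int V H_\xi\,\text{d}\xi + C\|(V_\xi,V_{\xi\xi},\mathbf{B}_\xi)\|^2 .
\end{equation*}
Integrating over $[0,t]$, the boundary term satisfies $|\int VH_\xi|(t)\le \|V(t)\|^2+\|H_\xi(t)\|^2$, and the same bound holds at time $0$. Both are controlled by the sup part of \eqref{eq45}, and the time integral is controlled by the dissipation part of \eqref{eq45}. This yields \eqref{eq57}.

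The main obstacle is the $V_t$ term. Its time derivative must be moved onto $H$ so that the $H_t$ equation can be used, since bounding it directly would require $\int\|V_t\|^2$, which is not yet available. The key point is that $H_t$ involves only first-order terms plus $R_2$, so no $H_{\xi\xi}$ or higher derivatives appear. Some care is needed in checking the sign and justifying the integration by parts in $\xi$, which uses $V,H\in H^2$ and decay at infinity.
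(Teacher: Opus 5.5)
Your proposal is correct and is essentially the paper's argument: multiply $\eqref{eq26}_1$ by $H_\xi$, write $\int V_tH_\xi\,\text{d}\xi=\frac{\text{d}}{\text{d}t}\int VH_\xi\,\text{d}\xi+\int V_\xi H_t\,\text{d}\xi$, substitute $\eqref{eq26}_2$, and close with the energy and dissipation bounds of Lemma \ref{lemma4.6}. One simplification: the two $\sigma\int V_\xi H_\xi\,\text{d}\xi$ contributions cancel exactly, so you need not apply Young's inequality to them; this also removes a harmless bookkeeping slip, since your stated constants leave $\tfrac38-C\delta$, not $\tfrac12$, in front of $\lVert H_\xi\rVert^2$.
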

\begin{proof}
    Multiplying $\eqref{eq26}_1$ by $H_\xi$ and using $\eqref{eq26}_2$ we have
    \begin{equation}\label{eq58}
        \begin{aligned}
            H_\xi^2 = &\left(VH_\xi\right)_t - V\left[R_{2, \xi}+\sigma H_{\xi \xi}-(p'(\bar{v})\bar{v}_\xi)_\xi\right]\\[0.25cm] - &\sigma V_\xi H_\xi- f'(\bar{v})V_{\xi \xi}H_\xi-f''(\bar{v})\bar{v}_\xi V_\xi H_\xi-R_1H_\xi. 
        \end{aligned}
    \end{equation}
    Integrating \eqref{eq58} with respect to $t$, $\xi$ and performing integration by parts  along with Lemma \ref{lemma4.6} leads to
    \begin{equation}\label{eq59}
        \begin{aligned}
            \int_0^t\int H_\xi^2\text{d}\xi &\text{d}\tau = \int VH_\xi\text{d}\xi-\int V_0H_{0,\xi}\text{d}\xi - \int_0^t\int V\left[R_{2, \xi}-(p'(\bar{v})V_\xi)_\xi\right]\text{d}\xi \text{d}\tau\\[0.25cm]  &- \int_0^t\int \left[f'(\bar{v})V_{\xi \xi}H_\xi+f''(\bar{v})\bar{v}_\xi V_\xi H_\xi+R_1H_\xi\right]\text{d}\xi \text{d}\tau\\[0.25cm]
            & \leq C\lVert (V_0, H_0, \mathbf{W}_0, \mathbf{B}_0)\rVert_1^2+ C\int_0^t\int \left(|V_\xi|^2+|\mathbf{B}_\xi|^2\right)|V_\xi|\text{d}\xi \text{d}\tau- C\int_0^t\int|V_\xi|^2\text{d}\xi \text{d}\tau\\[0.25cm]
            & +\int_0^t\int \dfrac{1}{2}H_\xi^2\text{d}\xi \text{d}\tau+C\int_0^t\int \left(|V_\xi|^2+|V_{\xi \xi}|^2+|R_1|^2\right)\text{d}\xi \text{d}\tau\\[0.25cm]
             & \leq C\lVert (V_0, H_0, \mathbf{W}_0, \mathbf{B}_0)\rVert_1^2+\int_0^t\int \dfrac{1}{2}H_\xi^2\text{d}\xi \text{d}\tau +C\int_0^t\int \left(|V_\xi|^2+|V_{\xi \xi}|^2+|\mathbf{B}_\xi|^2\right)\text{d}\xi \text{d}\tau\\[0.25cm]
             & \leq C\lVert (V_0, H_0, \mathbf{W}_0, \mathbf{B}_0)\rVert_1^2+\int_0^t\int \dfrac{1}{2}H_\xi^2\text{d}\xi \text{d}\tau.
        \end{aligned}
    \end{equation}
    From \eqref{eq59}, the Lemma \ref{lemma4.7} follows directly.
\end{proof}
\begin{lemma}\label{lemma4.8}
    Suppose that the assumptions of the proposition \ref{proposition3.1} hold, then we have the following estimate for $t\in [0, T]$ as
    \begin{equation}\label{eq60}
        \begin{aligned}
            \lVert(V_{\xi \xi}, H_{\xi \xi}, \mathbf{W}_{\xi \xi}, \mathbf{B}_{\xi \xi})(t)\rVert^2+ \int_0^t \lVert(V_{\xi \xi \xi}, \mathbf{W}_{\xi \xi \xi}, \mathbf{B}_{\xi \xi \xi})(\tau)\rVert^2\text{d}\tau \leq C\lVert (V_0, H_0, \mathbf{W}_0, \mathbf{B}_0)\rVert_2^2 .
        \end{aligned}
    \end{equation}
\end{lemma}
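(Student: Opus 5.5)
The plan is to repeat the energy method of Lemma \ref{lemma4.6} one derivative higher. First differentiate each equation of \eqref{eq26} once in $\xi$. Then multiply the differentiated equations $\eqref{eq26}_1$, $\eqref{eq26}_2$, $\eqref{eq26}_3$, $\eqref{eq26}_4$ by $-V_{\xi\xi\xi}$, $H_{\xi\xi\xi}/p'(\bar v)$, $-\bar v\,\mathbf{W}_{\xi\xi\xi}$ and $-\mathbf{B}_{\xi\xi\xi}$ respectively, sum, and integrate over $\xi$. Integration by parts should give an identity of the form
\begin{equation*}
\frac12\frac{\text{d}}{\text{d}t}\int\Big(V_{\xi\xi}^2-\frac{H_{\xi\xi}^2}{p'(\bar v)}+\bar v|\mathbf{W}_{\xi\xi}|^2+|\mathbf{B}_{\xi\xi}|^2\Big)\text{d}\xi-\frac{\sigma}{2}\int\Big(\frac{1}{p'(\bar v)}\Big)_\xi H_{\xi\xi}^2\text{d}\xi+\int f'(\bar v)V_{\xi\xi\xi}^2+\mu|\mathbf{W}_{\xi\xi\xi}|^2+\frac{\nu}{\bar v^2}|\mathbf{B}_{\xi\xi\xi}|^2\,\text{d}\xi=\sum_j J_j .
\end{equation*}
The second term on the left is nonnegative, by the Remark after Lemma \ref{lemma4.5}. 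The right-hand side $\sum_j J_j$ collects three kinds of terms: the forcing terms $R_{i,\xi}$ paired with the third-derivative multipliers; commutator terms involving $\bar v_\xi$, $\bar v_{\xi\xi}$ and lower-order derivatives of $V,\mathbf{W},\mathbf{B}$; and the terms produced by the time derivative hitting $\bar v$-weights, such as $\int(1/p'(\bar v))_\xi H_{\xi\xi}H_{\xi t}$ and $\int\bar v_\xi\mathbf{W}_{\xi\xi}\cdot\mathbf{W}_{\xi t}$.

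The linear commutator terms are handled exactly as in Lemma \ref{lemma4.6}. Each is bounded by $\epsilon\|(V_{\xi\xi\xi},\mathbf{W}_{\xi\xi\xi},\mathbf{B}_{\xi\xi\xi})\|^2+C_\epsilon\|(V_\xi,V_{\xi\xi},\mathbf{W}_\xi,\mathbf{W}_{\xi\xi},\mathbf{B}_\xi,\mathbf{B}_{\xi\xi})\|^2$, and these lower-order dissipation terms are already integrable in time by Lemma \ref{lemma4.6}. For the $H_{\xi t}$ term, substitute the differentiated $\eqref{eq26}_2$, i.e. $H_{\xi t}=\sigma H_{\xi\xi}-(p'(\bar v)V_\xi)_\xi+R_{2,\xi}$. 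The piece $\sigma(1/p'(\bar v))_\xi H_{\xi\xi}^2$ combines with the good term on the left, mirroring \eqref{eq48}--\eqref{eq49}. The cross term $(1/p'(\bar v))_\xi\, p'(\bar v)V_{\xi\xi}H_{\xi\xi}$ is absorbed by Young's inequality into $-\frac{\sigma}{4}\int(1/p'(\bar v))_\xi H_{\xi\xi}^2$ plus $C\|V_{\xi\xi}\|^2$. Similarly, for the $\mathbf{W}_{\xi t}$ term, use the differentiated $\eqref{eq26}_3$ as in \eqref{eq51}.

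The nonlinear terms are handled with Lemmas \ref{lemma4.1}--\ref{lemma4.4} together with \eqref{eq36}. For the $R_2$ pairing, integrate by parts to move one derivative off $H_{\xi\xi\xi}$, as in \eqref{eq48}, which produces $\int R_{2,\xi\xi}H_{\xi\xi}/p'(\bar v)$. Terms such as $|V_\xi||V_{\xi\xi\xi}||H_{\xi\xi}|$ are bounded by $\delta(\|V_{\xi\xi\xi}\|^2+\|H_{\xi\xi}\|^2_{L^\infty\text{-free}})$: I would use $\|V_\xi\|_{L^\infty}\le\delta$ and then Young, giving $C\delta\|V_{\xi\xi\xi}\|^2+C\delta\|H_{\xi\xi}\|^2$. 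The cubic terms, such as $|V_\xi||V_{\xi\xi}|^2$ and $|\mathbf{B}_\xi|^2|V_{\xi\xi}|$, are controlled via $\|V_{\xi\xi}\|_{L^\infty}\le C\|V_{\xi\xi}\|^{1/2}\|V_{\xi\xi\xi}\|^{1/2}$ and the smallness $\delta$. The remaining nonlinear pairings, $R_{1,\xi}V_{\xi\xi\xi}$, $R_{3,\xi}\mathbf{W}_{\xi\xi\xi}$ and $R_{4,\xi}\mathbf{B}_{\xi\xi\xi}$, are bounded by $C\delta\|(V_{\xi\xi\xi},\mathbf{W}_{\xi\xi\xi},\mathbf{B}_{\xi\xi\xi})\|^2$ plus lower-order dissipation.

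I expect the main obstacle to be the term $C\delta\int_0^t\|H_{\xi\xi}\|^2$. There is no direct dissipation in $H_{\xi\xi}$, and $(1/p'(\bar v))_\xi$ degenerates at $\pm\infty$. To deal with it, I would run the argument of Lemma \ref{lemma4.7} at one derivative higher: multiply the differentiated $\eqref{eq26}_1$ by $H_{\xi\xi}$ and use $\eqref{eq26}_2$ to express $H_{\xi t}$. This bounds $\int_0^t\|H_{\xi\xi}\|^2$ by $\|(V_0,H_0)\|_2^2$, the current energy at time $t$, and $C\int_0^t\|(V_{\xi\xi},V_{\xi\xi\xi})\|^2$, where the current energy is absorbed after multiplying this estimate by a small constant. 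Adding this estimate with a small weight to the main identity, choosing $\epsilon,\delta$ small, integrating in time, and invoking Lemmas \ref{lemma4.6}--\ref{lemma4.7} for the lower-order terms yields \eqref{eq60}.
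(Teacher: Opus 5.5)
Your argument is correct, but it closes the estimate by a different mechanism than the paper. The energy identity is the same as the paper's: the paper differentiates twice and tests with second derivatives, you differentiate once and test with third derivatives, and the two agree after one integration by parts. The linear $H_{\xi\xi}$-terms are also handled alike. Substituting $H_{\xi t}$ produces the weighted good term, and the cross term $(1/p'(\bar v))_\xi(p'(\bar v)V_\xi)_\xi H_{\xi\xi}$ is absorbed into it; this is the paper's $I_{18}$.

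One detail needs care. The term $\int(1/p'(\bar v))_{\xi\xi}\,p'(\bar v)V_\xi H_{\xi\xi}\,\text{d}\xi$ (the paper's $I_{19}$) has an $O(1)$ coefficient, so it cannot go into your $\kappa$-weighted $H_{\xi\xi}$ dissipation. You can either integrate by parts onto $H_\xi$ and use Lemma \ref{lemma4.7}, as the paper does, or use $|\bar v_{\xi\xi}|\le C|\bar v_\xi|$ (from $\bar v_\xi=H(\bar v)$) to absorb it into the weighted good term.

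The real difference lies in the nonlinear pairings with $H_{\xi\xi}$. The paper never lets $\int_0^t\|H_{\xi\xi}\|^2$ appear. It places the a priori bound $\sup_t\|H_{\xi\xi}(t)\|\le\delta$ from \eqref{eq35} on the $H$-factor and the $L^\infty$ norm on the other perturbation factor. For example, $\int|V_\xi||V_{\xi\xi\xi}||H_{\xi\xi}|\,\text{d}\xi\le\delta\|V_\xi\|_{L^\infty}\|V_{\xi\xi\xi}\|\le C\delta\|V_\xi\|_1\|V_{\xi\xi\xi}\|$, as in \eqref{eq68}. Since $R_{2,\xi\xi}$ is quadratic in $V_\xi,\mathbf{B}_\xi$ and their derivatives, this works term by term. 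Lemma \ref{lemma4.8} therefore closes on its own, and the $H_{\xi\xi}$ dissipation is derived afterwards in Lemma \ref{lemma4.9} by exactly the argument you embed.

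Your coupled version is also sound. You apply Young's inequality with $\|V_\xi\|_{L^\infty}\le\delta$, add $\kappa$ times the higher-order analogue of Lemma \ref{lemma4.7}, and absorb the boundary term $\int V_\xi H_{\xi\xi}\,\text{d}\xi$ at time $t$ via Young and Lemma \ref{lemma4.6}. You then take $\kappa$ small against the $V_{\xi\xi\xi}$ dissipation and $\delta\ll\kappa$. This costs an extra parameter and a joint absorption. In return it gives Lemmas \ref{lemma4.8} and \ref{lemma4.9} at once, and it needs smallness only of lower-order quantities, not of the top-order norm $\|H_{\xi\xi}\|$. The paper's ordering is simpler and more modular, since \eqref{eq35} already makes $\|H\|_2$ small.
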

\begin{proof}
   Differentiate $\eqref{eq26}_1$ twice with respect to $\xi$ and multiply the resulting equation by $V_{\xi\xi}$. 
For $\eqref{eq26}_2$, first multiply by $-\frac{1}{p'(\bar v)}$ then differentiate twice with respect to $\xi$ and multiply the resulting equation by $H_{\xi\xi}$. 
For $\eqref{eq26}_3$, first multiply by $\bar v$ then differentiate twice with respect to $\xi$ and multiply the resulting equation by $\mathbf{W}_{\xi\xi}$. 
Finally, differentiate $\eqref{eq26}_4$ twice with respect to $\xi$ and multiply by $\mathbf{B}_{\xi\xi}$. 
Summing these equations and integrating with respect to $\xi$, we obtain, after integration by parts,
 \begin{equation}\label{eq61}
        \begin{aligned}
            &\dfrac{1}{2}\dfrac{\text{d}}{\text{d}t}\int \left(V_{\xi \xi}^2-\dfrac{H_{\xi \xi}^2}{p'(\bar{v})}+\bar{v}|\mathbf{W}_{\xi \xi}|^2+|\mathbf{B}_{\xi \xi}|^2\right)\text{d}\xi - \dfrac{\sigma}{2}\int \left(\frac{1}{p'(\bar{v})}\right)_\xi H_{\xi \xi}^2\text{d}\xi \\[0.2cm]
            & +\int f'(\bar{v}) V_{\xi \xi \xi}^2\text{d}\xi\, +\, \mu\int |\mathbf{W}_{\xi \xi \xi}|^2\text{d}\xi\, +\,\int \left(\dfrac{\nu}{\bar{v}^2}\right)|\mathbf{B}_{\xi \xi \xi}|^2\text{d}\xi\text{d}\tau\\[0.2cm]
            &=  \int R_{1,\xi \xi}V_{\xi \xi} \text{d}\xi+\int (\bar{v}R_3)_{\xi \xi}\mathbf{W}_{\xi \xi} \text{d}\xi + \int  R_{4,\xi \xi}\mathbf{B}_{\xi \xi} \text{d}\xi -\int R_{2, \xi \xi}\left(\dfrac{1}{p'(\bar{v})}\right)H_{\xi \xi}\text{d}\xi \\[0.2cm]
            & -2\int \left(\dfrac{1}{p'(\bar{v})}\right)_\xi\left(p'(\bar{v})V_\xi\right)_\xi H_{\xi \xi}\text{d}\xi-\int \left(\dfrac{1}{p'(\bar{v})}\right)_{\xi \xi}p'(\bar{v})V_\xi H_{\xi \xi}\text{d}\xi-\int \dfrac{\bar{v}_{\xi \xi}}{\bar{v}}\mathbf{W}_{\xi \xi}\, .\, \mathbf{B}_\xi\text{d}\xi\\[0.21cm]
            & -\mu\int \dfrac{\bar{v}_{\xi \xi}}{\bar{v}}\left|\mathbf{W}_{\xi \xi}\right|^2\text{d}\xi-2 \int \bar{v}_\xi \mathbf{W}_{\xi \xi}\, .\, \left(\dfrac{\mathbf{B}_\xi}{\bar{v}}\right)_\xi \text{d}\xi- 2 \int \bar{v}_\xi \mathbf{W}_{\xi \xi}\, .\, \left(\dfrac{\mathbf{W}_{\xi \xi}}{\bar{v}}\right)_\xi \text{d}\xi\\[0.21cm]
            & +\sigma \int \bar{v}\mathbf{W}_{\xi \xi \xi}\, .\, \mathbf{W}_{\xi \xi}\text{d}\xi+ \int f'(\bar{v})_{\xi \xi \xi}V_\xi V_{\xi \xi}\text{d}\xi+ 2\int f'(\bar{v})_{\xi \xi}V_{\xi \xi}^2\text{d}\xi-\int \left(\dfrac{\nu}{\bar{v}^2}\right)_\xi \mathbf{B}_{\xi \xi}\, . \,\mathbf{B}_{\xi \xi \xi}\text{d}\xi\\[0.21cm]
            & -\int \left(\left(\dfrac{\nu}{\bar{v}^2}\right)_\xi \mathbf{B}_\xi\right)_{\xi \xi}\, .\, \mathbf{B}_{\xi \xi}\text{d}\xi =:\sum_{i=14}^{28}I_i.
        \end{aligned}
    \end{equation}
    Using Lemmas \ref{lemma4.1}-\ref{lemma4.4}, \ref{lemma4.6}-\ref{lemma4.7} and applying Cauchy-Schwartz's inequality together with the boundedness of the viscous shock, we estimate terms $I_i (i = 14,...,28)$ as follows
    \begin{equation}\label{eq62}
        \begin{aligned}
            I_{14} &= -\int R_{1, \xi}V_{\xi \xi \xi}\text{d}\xi\\[0.21cm]
            & \leq C \int \left(V_{\xi \xi}^2+|V_\xi||V_{\xi \xi}|^2+|V_\xi||V_{\xi \xi \xi}|+|V_\xi|^2\right)|V_{\xi \xi \xi}|\text{d}\xi\\[0.21cm]
            & \leq C \lVert V_{\xi \xi}\rVert_{L^\infty}\lVert V_{\xi \xi}\rVert \lVert V_{\xi \xi \xi}\rVert+ C \delta\lVert V_{\xi \xi}\rVert_{L^\infty}\lVert V_{\xi \xi}\rVert \lVert V_{\xi \xi \xi}\rVert + C\delta \lVert V_{\xi \xi \xi}\rVert^2 + C\delta \left(\lVert V_{\xi }\rVert^2+\lVert V_{\xi \xi \xi}\rVert^2\right) \\[0.21cm]
            & \leq C\delta \left(\lVert V_{\xi \xi}\rVert + \lVert V_{\xi \xi \xi}\rVert\right)\lVert V_{\xi \xi \xi}\rVert +  C\delta \left( \lVert V_{\xi}\rVert^2+\lVert V_{\xi \xi \xi}\rVert^2 \right)\\[0.21cm]
            &\leq  C\delta \left(\lVert V_{\xi}\rVert^2+\lVert V_{\xi \xi }\rVert^2+\lVert V_{\xi \xi \xi}\rVert^2 \right) 
        \end{aligned}
        \end{equation}
        
        \begin{equation}\label{eq63}
            \begin{aligned}
                I_{18} =& -2\int \left(p'(\bar{v})V_{\xi \xi}+p''(\bar{v})\bar{v}_\xi V_\xi\right) \left(\dfrac{1}{p'(\bar{v})}\right)_\xi H_{\xi \xi} \text{d}\xi\\[0.21cm]
                &\leq  -\dfrac{\sigma}{4}\int \left(\frac{1}{p'(\bar{v})}\right)_\xi H_{\xi \xi}^2\text{d}\xi +C \int V_{\xi \xi}^2\text{d}\xi +C \int V_\xi^2 \text{d}\xi
            \end{aligned}
        \end{equation}
        \vspace{0.5cm}
        \begin{equation}\label{eq64}
            \begin{aligned}
                I_{19} &= \int \left[\left(\dfrac{1}{p'(\bar{v})}\right)_{\xi \xi}p'(\bar{v})V_\xi \right]_\xi H_{\xi}\text{d}\xi\\[0.21cm]
                & \leq C \left(\int V_{\xi \xi}^2\text{d}\xi+ \int V_{\xi}^2\text{d}\xi + \int H_{ \xi}^2\text{d}\xi \right).
            \end{aligned}
        \end{equation}
        Meanwhile, it is straightforward to deduce that
        \begin{equation}\label{eq65}
            \begin{aligned}
                |I_{25}+I_{26}| \leq (\epsilon+C_\epsilon)\int |V_{\xi \xi}|^2\text{d}\xi+ C_\epsilon\int  |V_\xi|^2\text{d}\xi, 
            \end{aligned}
        \end{equation}
       
        \begin{equation}\label{eq66}
            \begin{aligned}
                |I_{20}+I_{21}| \leq \epsilon \lVert \mathbf{B}_\xi\rVert^2+ C_\epsilon\lVert \mathbf{W}_{ \xi \xi}\rVert^2
            \end{aligned}
        \end{equation}
     and 
      \begin{equation}\label{eq67}
            \begin{aligned}
                |I_{22}+I_{23}+I_{24}| \leq C\left( \lVert \mathbf{B}_\xi\rVert^2+ \lVert \mathbf{W}_{ \xi \xi}\rVert^2+ \lVert \mathbf{B}_{\xi \xi}\rVert^2+ \lVert \mathbf{W}_{ \xi \xi \xi}\rVert^2\right).
            \end{aligned}
        \end{equation}
        Now for $I_{17}$,
        \begin{equation}\label{eq68}
\begin{aligned}
|I_{17}| &\leq C \int \biggr( |V_{\xi\xi\xi}|\,|V_\xi| + V_\xi^2 + V_{\xi\xi}^2 
+ |V_\xi|\,|V_{\xi\xi}|^2 + |\mathbf{B}_\xi|^2 + |\mathbf{B}_{\xi\xi}|^2+ |\mathbf{B}_\xi||\mathbf{B}_{\xi\xi}|^2+ |\mathbf{B}_\xi||V_{\xi\xi}|^2  \\
&\qquad \qquad \qquad \qquad +|\mathbf{B}_\xi|^2|V_{\xi \xi}|^2 +|\mathbf{B}_\xi|^2|V_{\xi \xi \xi}|+  |\mathbf{B}_{\xi\xi}||\mathbf{B}_{\xi\xi\xi}| + |\mathbf{B}_\xi|^2 V_\xi 
+ |\mathbf{B}_\xi|^2 V_{\xi\xi} \biggr) |H_{\xi\xi}|\, d\xi \\[0.21cm]
& \leq C \lVert V_\xi \rVert_{L^\infty} \lVert V_{\xi \xi \xi}\rVert \lVert H_{\xi \xi}\rVert+ C\lVert V_\xi \rVert_{L^\infty} \lVert V_{ \xi}\rVert \lVert H_{\xi \xi}\rVert + C\lVert V_{\xi \xi} \rVert_{L^\infty} \lVert V_{\xi \xi}\rVert \lVert H_{\xi \xi}\rVert\\[0.21cm]
& + C\delta \lVert V_{\xi \xi} \rVert_{L^\infty} \lVert V_{\xi \xi}\rVert \lVert H_{\xi \xi}\rVert+C \lVert \mathbf{B}_{\xi} \rVert_{L^\infty} \lVert \mathbf{B}_{\xi}\rVert \lVert H_{\xi \xi}\rVert + C \lVert \mathbf{B}_{\xi \xi} \rVert_{L^\infty} \lVert \mathbf{B}_{\xi \xi} \rVert \lVert H_{\xi \xi}\rVert\\[0.21cm]
& + C\delta \lVert \mathbf{B}_{\xi \xi} \rVert_{L^\infty} \lVert \mathbf{B}_{\xi \xi} \rVert \lVert H_{\xi \xi}\rVert+ C\delta \lVert V_{\xi \xi} \rVert_{L^\infty} \lVert V_{\xi \xi} \rVert \lVert H_{\xi \xi}\rVert + C\delta \lVert \mathbf{B}_{ \xi} \rVert_{L^\infty} \lVert V_{\xi  \xi \xi} \rVert \lVert H_{\xi \xi}\rVert\\[0.21cm]
& + \lVert \mathbf{B}_{\xi \xi} \rVert_{L^\infty}\lVert \mathbf{B}_{\xi \xi \xi} \rVert\lVert H_{\xi \xi}\rVert+C\delta \lVert \mathbf{B}_{\xi} \rVert_{L^\infty} \lVert \mathbf{B}_{\xi} \rVert\lVert H_{\xi \xi}\rVert+ C\delta \lVert \mathbf{B}_{\xi} \rVert_{L^\infty} \lVert V_{\xi \xi} \rVert\lVert H_{\xi \xi}\rVert\\[0.5cm]
& \leq C\delta \left(\lVert V_\xi \rVert+ \lVert V_{\xi \xi} \rVert\right) \left(\lVert V_{\xi \xi \xi}\rVert +\lVert V_{\xi }\rVert \right) + C\delta \left(\lVert V_{\xi \xi}\rVert+\lVert V_{\xi \xi \xi}\rVert\right) \lVert V_{\xi \xi}\rVert \\[0.21cm]&+ C \delta \left(\lVert \mathbf{B}_\xi\rVert+\lVert \mathbf{B}_{\xi \xi}\rVert\right)\lVert \mathbf{B}_{\xi}\rVert + C \delta \left(\lVert \mathbf{B}_{\xi \xi}\rVert+\lVert \mathbf{B}_{\xi \xi  \xi}\rVert\right)\lVert \mathbf{B}_{\xi \xi}\rVert + C\delta \left(\lVert V_{\xi \xi}\rVert+ \lVert V_{\xi \xi\xi}\rVert\right)\lVert V_{\xi \xi}\rVert\\[0.21cm]
& +C \delta \left(\lVert \mathbf{B}_\xi\rVert+\lVert \mathbf{B}_{\xi \xi}\rVert\right)\lVert V_{\xi \xi \xi}\rVert + C \delta \left(\lVert \mathbf{B}_{\xi \xi}\rVert+\lVert \mathbf{B}_{\xi \xi  \xi}\rVert\right)\lVert \mathbf{B}_{\xi \xi \xi}\rVert+C \delta \left(\lVert \mathbf{B}_\xi\rVert+\lVert \mathbf{B}_{\xi \xi}\rVert\right)\lVert V_{\xi \xi}\rVert\\[0.5cm]
& \leq C\delta \left(\lVert V_{\xi}\rVert^2+ \lVert V_{\xi \xi}\rVert^2+ \lVert V_{\xi \xi \xi}\rVert^2 + \lVert \mathbf{B}_{\xi}\rVert^2+\lVert \mathbf{B}_{\xi \xi}\rVert^2 + \lVert \mathbf{B}_{\xi \xi \xi}\rVert^2\right).
\end{aligned}
\end{equation}
Similarly for $I_{16}$,
\begin{equation}\label{eq69}
    \begin{aligned}
        I_{16} &= -\int R_{4, \xi}\mathbf{B}_{\xi \xi \xi}\text{d}\xi\\[0.21cm]
        & \leq C \int \left(|V_{\xi \xi}||\mathbf{B}_{\xi \xi}|+|V_\xi||\mathbf{B}_{\xi \xi}|+|V_\xi||\mathbf{B}_{\xi \xi \xi}|+|\mathbf{B}_{\xi}||V_{\xi \xi \xi}|+|V_{\xi \xi}|^2|\mathbf{B}_\xi|+|V_{\xi \xi}||\mathbf{B}_{\xi}|+|V_\xi||\mathbf{B}_\xi| \right)|\mathbf{B}_{\xi \xi \xi}|\text{d}\xi\\[0.25cm]
        & \leq C \lVert \mathbf{B}_{\xi \xi}\rVert_{L^\infty}\lVert V_{\xi \xi}\rVert \lVert \mathbf{B}_{\xi \xi \xi}\rVert +C \delta \lVert \mathbf{B}_{\xi \xi}\rVert\lVert \mathbf{B}_{\xi \xi \xi}\rVert +C\delta \lVert \mathbf{B}_{\xi \xi \xi}\rVert^2+C\delta \lVert V_{\xi \xi \xi}\rVert\lVert \mathbf{B}_{\xi \xi \xi}\rVert\\[0.21cm]
        &\quad+C\delta  \lVert V_{\xi \xi}\rVert_{L^\infty}\lVert V_{\xi \xi}\rVert \lVert \mathbf{B}_{\xi \xi \xi}\rVert+C\delta \lVert V_{ \xi \xi}\rVert\lVert \mathbf{B}_{\xi \xi \xi}\rVert+C\delta \lVert V_{ \xi}\rVert\lVert \mathbf{B}_{\xi \xi \xi}\rVert\\[0.4cm]
        & \leq C\delta \left(\lVert \mathbf{B}_{ \xi \xi}\rVert^2+ \lVert \mathbf{B}_{\xi \xi \xi}\rVert^2 + \lVert V_{ \xi \xi}\rVert^2 + \lVert V_{ \xi \xi \xi}\rVert^2 +\lVert V_{ \xi}\rVert^2\right)
    \end{aligned}
\end{equation}
and for $I_{15}$, 
\begin{equation}\label{eq70}
    \begin{aligned}
        I_{15} &= - \int \left(\bar{v}R_{3, \xi}+ \bar{v}_\xi R_{3}\right) \mathbf{W}_{\xi \xi \xi} \text{d}\xi\\[0.21cm]
        & \leq C \int \left(|V_\xi||\mathbf{W}_{\xi \xi \xi}|+|V_{\xi \xi}||W_{\xi \xi}|+|V_{\xi}||\mathbf{W}_{\xi \xi}|+|V_\xi||\mathbf{B}_{\xi \xi}|+|V_{\xi \xi}||\mathbf{B}_{ \xi}|+|V_{\xi}||\mathbf{B}_{\xi}|\right)| \mathbf{W}_{ \xi \xi}|\text{d}\xi\\[0.5cm]
        & \leq C\delta \left(\lVert \mathbf{W}_{ \xi \xi}\rVert^2+\lVert \mathbf{W}_{ \xi \xi \xi}\rVert^2 + \lVert \mathbf{B}_{ \xi \xi}\rVert^2 +\lVert V_{ \xi \xi}\rVert^2+\lVert V_{\xi}\rVert^2\right).
    \end{aligned}
\end{equation}
\vspace{0.5cm}
\begin{equation}\label{eq71}
    \begin{aligned}
        I_{27}+I_{28} &=-\int \left(\dfrac{\nu}{\bar{v}^2}\right)_\xi \mathbf{B}_{\xi \xi}\, . \,\mathbf{B}_{\xi \xi \xi}\text{d}\xi +\int \left(\left(\dfrac{\nu}{\bar{v}^2}\right)_\xi \mathbf{B}_\xi\right)_{ \xi} \mathbf{B}_{\xi \xi \xi}\text{d}\xi\\[0.21cm]
        & \leq C \left(\lVert \mathbf{B}_{\xi}\rVert^2+\lVert \mathbf{B}_{\xi \xi}\rVert^2+\lVert \mathbf{B}_{\xi \xi \xi}\rVert^2\right)
    \end{aligned}
\end{equation}
Combining all together from \eqref{eq62}-\eqref{eq71} in \eqref{eq61}, choosing suitably small positive $\epsilon$ and  sufficiently positive small $\delta$, doing integration of resulting inequality with respect to $t$ and taking into account Lemmas \ref{lemma4.6}-\ref{lemma4.7}, \eqref{eq60} follows immediately. Hence, Lemma \ref{lemma4.8} proved.
\end{proof}
It now remains to control the term $\int_0^t\lVert H_{\xi \xi}(\tau)\rVert^2 \text{d}\tau$, for which we have
\begin{lemma}\label{lemma4.9}
    Suppose that the assumptions of the proposition \ref{proposition3.1} hold, then we have the following estimate for $t\in [0, T]$ as
    \begin{equation}\label{eq72}
        \begin{aligned}
            \int_0^t \lVert H_{\xi \xi}(\tau)\rVert^2 \text{d}\tau \leq C\lVert (V_0, H_0, \mathbf{W}_0, \mathbf{B}_0)\rVert_2^2 .
        \end{aligned}
    \end{equation}
\end{lemma}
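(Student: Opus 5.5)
We mimic the proof of Lemma \ref{lemma4.7} one derivative higher. The dissipation in $H_\xi$ comes from the coupling term $-H_\xi$ in $\eqref{eq26}_1$, so the plan is to differentiate $\eqref{eq26}_1$ once with respect to $\xi$ and multiply by $-H_{\xi\xi}$ (equivalently, multiply $\eqref{eq26}_1$ by $H_{\xi\xi\xi}$ and integrate by parts). This gives
\begin{equation*}
H_{\xi\xi}^2 = \left(V_\xi H_{\xi\xi}\right)_t - V_\xi H_{\xi\xi t} - \sigma V_{\xi\xi}H_{\xi\xi} - \left(f'(\bar v)V_{\xi\xi}+f''(\bar v)\bar v_\xi V_\xi\right)_\xi H_{\xi\xi} - R_{1,\xi}H_{\xi\xi}.
\end{equation*}
The troublesome term is $-V_\xi H_{\xi\xi t}$. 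Differentiating $\eqref{eq26}_2$ once gives $H_{\xi t} = \sigma H_{\xi\xi} - (p'(\bar v)V_\xi)_\xi + R_{2,\xi}$. Integrating by parts in $\xi$ then yields
\begin{equation*}
\int V_{\xi\xi}H_{\xi t}\,\text{d}\xi = \int V_{\xi\xi}\left(\sigma H_{\xi\xi} - (p'(\bar v)V_\xi)_\xi + R_{2,\xi}\right)\text{d}\xi .
\end{equation*}
Thus no time derivative of $H$ survives. The remaining terms involve only $V_{\xi\xi}H_{\xi\xi}$, $V_{\xi\xi}^2$, $V_\xi V_{\xi\xi}$, $V_{\xi\xi}R_{2,\xi}$, $V_{\xi\xi\xi}H_{\xi\xi}$, $V_{\xi\xi}H_{\xi\xi}$ and $R_{1,\xi}H_{\xi\xi}$.

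Next we integrate over $[0,t]\times\mathbb{R}$. The boundary-in-time terms $\int V_\xi H_{\xi\xi}\,\text{d}\xi$ at times $t$ and $0$ are bounded by $\|V_\xi\|^2+\|H_{\xi\xi}\|^2$. By Lemmas \ref{lemma4.6} and \ref{lemma4.8} this is at most $C\|(V_0,H_0,\mathbf W_0,\mathbf B_0)\|_2^2$. Every bilinear term containing $H_{\xi\xi}$ we handle by Young's inequality: we keep $\tfrac14\int_0^t\|H_{\xi\xi}\|^2$ on the left and pay $C\int_0^t(\|V_\xi\|^2+\|V_{\xi\xi}\|^2+\|V_{\xi\xi\xi}\|^2)$. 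Lemmas \ref{lemma4.6} and \ref{lemma4.8} already control these dissipation integrals. The smooth coefficients $f'(\bar v)$, $f''(\bar v)\bar v_\xi$ and their derivatives are bounded because the profile is smooth with bounded derivatives.

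The nonlinear terms we treat with Lemmas \ref{lemma4.1} and \ref{lemma4.2}. By \eqref{eq37} and \eqref{eq36}, $\|R_{1,\xi}\|\le C(\|V_{\xi\xi}\|_{L^\infty}\|V_{\xi\xi}\|+\delta\|V_{\xi\xi}\|+\delta\|V_{\xi\xi\xi}\|+\|V_\xi\|)$. Using $\|V_{\xi\xi}\|_{L^\infty}\le C\|V_{\xi\xi}\|_1\le C\delta+\|V_{\xi\xi\xi}\|$-type bounds (Sobolev plus \eqref{eq35}), $\|R_{1,\xi}\|^2$ is controlled by $C\|V_\xi\|_2^2$. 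Similarly, by \eqref{eq38}, $|\int V_{\xi\xi}R_{2,\xi}\,\text{d}\xi|\le C\int(|V_\xi|^2+|V_{\xi\xi}|^2+|\mathbf B_\xi|^2+|\mathbf B_{\xi\xi}|^2)\,\text{d}\xi$. Here the smallness of $\|\mathbf B_\xi\|_{L^\infty}$ absorbs the cubic factor $|\mathbf B_\xi|^2|V_{\xi\xi}|$. All of these time integrals are bounded via Lemmas \ref{lemma4.5}, \ref{lemma4.6} and \ref{lemma4.8}.

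Collecting the pieces, we obtain
\begin{equation*}
\tfrac34\int_0^t\|H_{\xi\xi}\|^2\,\text{d}\tau \le C\|(V_0,H_0,\mathbf W_0,\mathbf B_0)\|_2^2 ,
\end{equation*}
which is \eqref{eq72}. The main obstacle is eliminating $H_{\xi\xi t}$ in the time-derivative bookkeeping. The step to verify carefully is that substituting $\eqref{eq26}_2$ generates no term with a derivative of $H$ of order higher than $H_{\xi\xi}$. Equally, the substitution must produce no term $V_{\xi\xi\xi\xi}$: the highest derivative of $V$ that appears must be $V_{\xi\xi\xi}$, since only $\int_0^t\|V_{\xi\xi\xi}\|^2$ is available from Lemma \ref{lemma4.8}.
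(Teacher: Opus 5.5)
Your proposal is correct and is essentially the paper's proof. The paper also multiplies $\eqref{eq26}_1$ by $H_{\xi\xi\xi}$; your differentiated version is the same identity after one integration by parts. It likewise eliminates the time derivative of $H$ through $\eqref{eq26}_2$, bounds the time-boundary term $\int V_\xi H_{\xi\xi}\,\text{d}\xi$ by Lemmas \ref{lemma4.6} and \ref{lemma4.8}, and closes with Young's inequality and the dissipation integrals from Lemmas \ref{lemma4.6}--\ref{lemma4.8}. The differences are cosmetic: the paper puts both derivatives on $V_\xi$ in the $R_2$ and $p'(\bar v)$ terms (giving $\int R_2 V_{\xi\xi\xi}$ and $\int p'(\bar v)V_\xi V_{\xi\xi\xi}$) where you split them one-and-one, and your two $\sigma V_{\xi\xi}H_{\xi\xi}$ terms actually cancel exactly.
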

\begin{proof}
    Multiplying $\eqref{eq26}_1$ by $H_{\xi \xi  \xi}$ and using $\eqref{eq26}_2$, we obtain
    \begin{equation}\label{eq73}
        \begin{aligned}
            H_{\xi \xi}^2 =& (V_\xi H_{\xi \xi})_t + \left(-V_tH_{\xi \xi}+ H_\xi H_{\xi \xi}+(f'(\bar{v})V_\xi)_\xi H_{\xi \xi}\right)_\xi\\[0.21cm]
            &-(f'(\bar{v})V_\xi)_{\xi \xi} H_{\xi \xi} - R_{2, \xi \xi}V_\xi + \left(p'(\bar{v})V_\xi\right)_{\xi \xi}V_\xi + R_1H_{\xi \xi \xi}
        \end{aligned}
    \end{equation}
    Now integrating \eqref{eq73} with respect to $\xi$ and $t$, we have
    \begin{equation}\label{eq74}
        \begin{aligned}
            \int_0^t \int H_{\xi \xi}^2\text{d}\xi \text{d}\tau =& \int \left(V_\xi H_{\xi \xi}- V_{0,\xi} H_{0,\xi \xi}\right)\text{d}\xi - \int_0^t \int R_{2,\xi \xi}V_\xi\text{d}\xi \text{d}\tau \\[0.21cm]
            & - \int_0^t \int (f'(\bar{v})V_\xi)_{\xi \xi} H_{\xi \xi}\text{d}\xi \text{d}\tau + \int_0^t \int R_1H_{\xi \xi \xi}\text{d}\xi \text{d}\tau + \int_0^t\int\left(p'(\bar{v})V_\xi\right)_{\xi \xi}V_\xi \text{d}\xi \text{d}\tau\\[0.21cm]
            & =: \sum_{i=29}^{33}I_i.
        \end{aligned}
    \end{equation}
    \vspace{0.5cm}
    Now we control the terms $I_i$ $(i\in\{29,..., 33\})$ as follows
    \begin{equation}\label{eq75}
        \begin{aligned}
            |I_{32}| &= \left|\int_0^t \int R_{1, \xi}H_{\xi \xi}\text{d}\xi \text{d}\tau\right|\\[0.21cm]
            & \leq C \int_0^t\int \left(V_{\xi\xi}^2 + |V_\xi||V_{\xi\xi}|^2 + |V_\xi||V_{\xi\xi\xi}| + |V_\xi|^2\right)|H_{\xi\xi}|\, d\xi d\tau\\[0.21cm]
            & \leq C \int_0^t \|V_{\xi\xi}\|_{L^\infty}\|V_{\xi\xi}\|\|H_{\xi\xi}\|\, d\tau + C \int_0^t \|V_\xi\|_{L^\infty}(\|V_\xi\|+\|V_{\xi\xi\xi}\|)\|H_{\xi\xi}\|\, d\tau\\[0.21cm]
            & \leq C \delta \int_0^t \Big((\|V_{\xi\xi}\|+\|V_{\xi\xi\xi}\|)\|V_{\xi\xi}\|+(\|V_\xi\|+\|V_{\xi\xi}\|)(\|V_\xi\|+\|V_{\xi\xi\xi}\|)\Big)\, d\tau  \\[0.21cm]
            & \leq C \delta \int_0^t \|(V_\xi,V_{\xi\xi},V_{\xi\xi\xi})\|^2\, d\tau, 
        \end{aligned}
    \end{equation}
    \begin{equation}\label{eq76}
        \begin{aligned}
            |I_{30}+I_{33}|=& \left|\int_0^t \int R_{2}V_{\xi \xi \xi}\text{d}\xi \text{d}\tau+\int_0^t\int p'(\bar{v})V_\xi V_{\xi \xi \xi} \text{d}\xi \text{d}\tau\right|\\[0.21cm]
            &\leq C \int_0^t \int \left( |V_\xi|^2+|\mathbf{B}_\xi|^2+|V_{ \xi}|\right)|V_{\xi \xi \xi}|\text{d}\xi \text{d}\tau\\[0.21cm]
            & \leq C\delta \int_0^t (\|V_\xi\|^2+\|V_{\xi \xi \xi}\|^2+\|\mathbf{B}_\xi\|^2)\text{d}\tau.
        \end{aligned}
    \end{equation}
    Meanwhile, a straightforward computation yields
    \begin{equation}\label{eq77}
        \begin{aligned}
            |I_{29}| \leq \|(V_\xi,H_{\xi\xi})\|^2 + \|(V_{0,\xi}, H_{0,\xi\xi})\|^2
        \end{aligned}
    \end{equation}
    and 
    \begin{equation}\label{eq78}
        \begin{aligned}
            |I_{31}| \leq \frac{1}{4}\int_0^t\|H_{\xi\xi}(\tau)\|^2 \, d\tau 
      + C \int_0^t \|(V_\xi,V_{\xi\xi},V_{\xi\xi\xi})\|^2 \, d\tau.
        \end{aligned}
    \end{equation}
    Plugging the estimates \eqref{eq75}-\eqref{eq78} in \eqref{eq74} and by virtue of Lemma \ref{lemma4.8}, \eqref{eq72} follows directly. This concludes the proof of Lemma \ref{lemma4.9}.
\end{proof}
It is observed that proof of Proposition \ref{proposition3.1} results as a consequence of Lemmas \ref{lemma4.5}-\ref{lemma4.9}. \hfill $\Box$
\section{Proof of Theorem \ref{theorem2.1}}\label{sec5}
In order to prove our main theorem \ref{theorem2.1}, it is enough to prove \eqref{eq21}. To prove this we follow the ideas of \cite{vasseur2016nonlinear}, \cite{he2020nonlinear}. With this first observe that from the global estimate \eqref{eq31} we have 
\begin{equation}\label{eq79}
    \begin{aligned}
        \|\left(V_\xi(.,t), H_\xi(.,t), \mathbf{W}_\xi(.,t), \mathbf{B}_\xi(.,t) \right)\|_1 \longrightarrow 0 \qquad \qquad \text{as}~~t\longrightarrow\infty.
    \end{aligned}
\end{equation}
Further from the fundamental theorem of calculus, 
\begin{equation}\label{eq80}
    \begin{aligned}
        V_\xi^2 &= 2\int_{-\infty}^\xi V_\xi V_{\xi \xi}(\eta,t)\text{d}\eta\\[0.21cm]
        & \leq 2 \|V_\xi(t)\|\|V_{\xi \xi}(t)\| \longrightarrow 0 \qquad \quad \text{as}~~t\longrightarrow\infty,
    \end{aligned}
\end{equation}
in a similar manner one can have
\begin{equation}\label{eq81}
    \begin{aligned}
        H_\xi(\xi, t), \mathbf{W}_\xi(\xi, t), \mathbf{B}_\xi(\xi, t) \longrightarrow 0 \qquad \quad \text{as}~~t\longrightarrow\infty, ~\forall \xi \in \mathbb{R}^1
    \end{aligned}
\end{equation}
It is easy to see that
\begin{equation}\label{eq82}
    \begin{aligned}
        \|\mathbf{b}\|_{L^\infty}(t)&\leq C \|\mathbf{B}_\xi\|^{\frac{1}{2}}\|\mathbf{B}_{\xi \xi}\|^{\frac{1}{2}}\\[0.21cm]
        & \leq C \|\mathbf{B}_\xi\|^{\frac{1}{2}}\longrightarrow 0, \quad \quad \text{as}~~t\longrightarrow\infty.
    \end{aligned}
\end{equation}
Now we need to prove $\sup_{\xi \in \mathbb{R}}|u(\xi,t)-\bar{u}(\xi)|\longrightarrow 0 ~~\text{as}~~t\longrightarrow\infty$, for this we have from $\eqref{eq22}_2$ as
\begin{equation}\label{eq83}
    \begin{aligned}
        u_t - \sigma u_\xi+ \left(p(v)+\dfrac{1}{2}|\mathbf{b}|^2\right)_\xi = \left(f'(v)u_\xi\right)_\xi
    \end{aligned}
\end{equation}
and for travelling wave,
\begin{equation}\label{eq84}
    \bar{u}_t - \sigma \bar{u}_\xi + p(\bar{v})_\xi = \left(f'(\bar{v})\bar{u}_\xi\right)_\xi
\end{equation}
subtracting \eqref{eq84} from \eqref{eq83} and define $z:= u-\bar{u}$ we obtain
\begin{equation}\label{eq85}
    \begin{aligned}
        z_t - \left(f'(v)z_\xi\right)_\xi = \sigma z_\xi - \left(p(v)-p(\bar{v})\right)_\xi + \dfrac{1}{2}\left(|\mathbf{b}|^2\right)_\xi + \left(\left(f'(v)-f'(\bar{v})\right)\bar{u}_\xi\right)_\xi.
    \end{aligned}
\end{equation}
From the global estimate \eqref{eq31}, it is evident that the right hand side of \eqref{eq85} is bounded in $L^2\left(0,T; L^2(\mathbb{R})\right)$. From the regularity results for parabolic equations of the form \eqref{eq85}, we have
\begin{equation}\label{eq86}
    \begin{aligned}
        z_t ~~\text{is bounded in}~~L^2(0,T; L^2(\mathbb{R})),
    \end{aligned}
\end{equation}
and 
\begin{equation}\label{eq87}
    \begin{aligned}
        z ~~\text{is bounded in}~~L^2(0,T; H^2(\mathbb{R})),
    \end{aligned}
\end{equation}
which in turn
\begin{equation}\label{eq88}
    \begin{aligned}
        \|z(\xi, t)\| \longrightarrow 0, \qquad \quad \text{as}~~t\longrightarrow\infty.
    \end{aligned}
\end{equation}
Further set $Z = z_\xi$, which satisfy
\begin{equation}\label{eq89}
    \begin{aligned}
        Z_t - \left(f'(v)Z\right)_{\xi \xi} = \sigma Z_\xi - \left(p(v)-p(\bar{v})\right)_{\xi \xi} + \dfrac{1}{2}\left(|\mathbf{b}|^2\right)_{\xi \xi} + \left(\left(f'(v)-f'(\bar{v})\right)\bar{u}_\xi\right)_{\xi \xi}.
    \end{aligned}
\end{equation}
By \eqref{eq31} and \eqref{eq87}, again the right hand side of \eqref{eq89} is bounded in  $L^2\left(0,T; L^2(\mathbb{R})\right)$, then from regularity result for parabolic equations with initial data
\begin{equation}\label{eq90}
    \begin{aligned}
       Z |_{t=t_0}  = z_\xi(.,t_0)
    \end{aligned}
\end{equation}
which is bounded in $L^2(\mathbb{R})$, subsequently yields
\begin{equation}\label{eq91}
    \begin{aligned}
        Z_t ~~\text{is bounded in}~~L^2(t_0,T; L^2(\mathbb{R}))
    \end{aligned}
\end{equation}
and 
\begin{equation}\label{eq92}
    \begin{aligned}
        Z ~~\text{is bounded in}~~L^2(t_0,T; H^2(\mathbb{R})),
    \end{aligned}
\end{equation}
which in turn
\begin{equation}\label{eq93}
    \begin{aligned}
        \|Z(\xi, t)\| \longrightarrow 0, \qquad \quad \text{as}~~t\longrightarrow\infty.
    \end{aligned}
\end{equation}
From \eqref{eq88} and \eqref{eq93} we have
\begin{equation}\label{eq94}
    \begin{aligned}
        \|z\|_1 \longrightarrow 0, \qquad \quad \text{as}~~t\longrightarrow\infty
    \end{aligned}
\end{equation}
and by the Sobolev inequality
\begin{equation}\label{eq95}
    \begin{aligned}
        \sup_{\xi \in \mathbb{R}}|z(\xi, t)| \longrightarrow 0, \quad \quad \text{as}~~t\longrightarrow\infty.
    \end{aligned}
\end{equation}
Therefore from \eqref{eq80}-\eqref{eq82}, \eqref{eq95} we deduce
\begin{align}\label{eq96}
    \sup_{\xi\in \mathbb{R}}\left|(v-\overline{v}, u-\overline{u}, \mathbf{w}, \mathbf{b})\right| \longrightarrow 0 \quad \text{as}~~t\longrightarrow \infty.
\end{align}
Which completes the proof of Theorem \ref{theorem2.1}.
\section*{Acknowledgments}
The first author expresses gratitude for financial support from the Indian Institute of Technology (Ref. No. IIT/ACAD/RS/EN/23MA91R03/2). The second author is grateful for the funding provided by the core research grant from ANRF, DST, Government of India (Ref. No. CRG/2022/006297).
\bibliographystyle{elsarticle-num}
	\bibliography{bibmarko}



\end{document}